\definecolor {processblue}{cmyk}{0.96,0,0,0}
  \newtheorem{The}{Theorem}[section]
  \newtheorem{Pro}[The]{Proposition}
  \newtheorem{Lem}[The]{Lemma}
  \newtheorem{Cor}[The]{Corollary}
  \newtheorem{Def}[The]{Definition}
  \newtheorem{Rem}[The]{Remark}
\newcommand{\bsm}{\begin{smallmatrix}}
\newcommand{\esm}{\end{smallmatrix}}
\newcommand{\bbm}{\begin{matrix}}
\newcommand{\ebm}{\end{matrix}}
\newtheorem*{theorem a*}{Theorem A}
\newtheorem*{theorem b*}{Theorem B}
\newtheorem*{corollary a*}{Corollary A}
\newtheorem*{theorem c*}{Theorem C}
\newtheorem*{corollary b*}{Corollary B}
\theoremstyle{definition}
\theoremstyle{plain}
\theoremstyle{definition}
\numberwithin{equation}{section}
\begin{document}

\title[Morita equivalence and Morita duality for rings with local units]{Morita equivalence and Morita duality for rings with local units and the subcategory of projective unitary modules}
\newcommand\shortTitle{Morita equivalence and Morita duality for rings with local units}
\author{Ziba Fazelpour}
\address{School of Mathematics, Institute for Research in Fundamental Sciences (IPM), P.O. Box: 19395-5746, Tehran, Iran}
\email{z.fazelpour@ipm.ir}
\author{Alireza Nasr-Isfahani}
\address{Department of Pure Mathematics\\
Faculty of Mathematics and Statistics\\
University of Isfahan\\
Isfahan 81746-73441, Iran\\ and School of Mathematics, Institute for Research in Fundamental Sciences (IPM), P.O. Box: 19395-5746, Tehran, Iran}
\email{nasr$_{-}$a@sci.ui.ac.ir / nasr@ipm.ir}

\subjclass[2000]{{16D90}, {16U99}, {16D40}, {16D50}}

\keywords{}

\begin{abstract}
We study Morita equivalence and Morita duality for rings with local units. We extend the Auslander's results on the theory of Morita equivalence and the Azumaya-Morita duality theorem to rings with local units. As a consequence, we give a version of Morita theorem and Azumaya-Morita duality theorem over rings with local units in terms of their full subcategory of finitely generated projective unitary modules and full subcategory of finitely generated injective unitary modules.

\end{abstract}

\maketitle

%%%%%%%%%%%%%%%%%%%%%%%%%%%%%%%%%%%%%%
\section{Introduction}
Two rings with identity are called equivalent when their categories of modules in the category-theoretical sense are equivalent (i.e. there is a functor between their module categories, which has an inverse functor). Kiiti Morita proved a theorem which gave an equivalent condition for this equivalence in an influential paper \cite{M} and, in his honour, this equivalence of module categories between two rings has been dubbed Morita equivalence. This theorem plays an important role in modern algebra. Nowadays, this phenomena is applied in different but closely related senses in a wide range of mathematical fields (see \cite{ME} and references therein). Any property of modules defined purely in terms of modules and their homomorphisms (and not to their underlying elements or ring) is a categorical property which will be preserved by the equivalence functor. If two rings are Morita equivalent, there is an induced equivalence of the respective categories of projective modules since the Morita equivalences will preserve exact sequences and hence projective modules. In \cite[Proposition 6.5]{ausla1} Auslander proved that for each finitely generated projective left module $P$ over a ring $\Lambda$ with identity, the category ${\rm End}_{\Lambda}(P)$-Mod is equivalent to the full subcategory of $\Lambda$-Mod which its objects are all $P$-torsion $P$-generated left $\Lambda$-modules (see also \cite[Ch. XI, Sect. 8]{bs}). In fact, the Auslander's result is a generalization of the Morita theorem. Moreover he gave a version of the Morita theorem over rings with identity in terms of their full subcategory of finitely generated projective modules. More precisely he proved that two rings $\Lambda$ and $\Gamma$ with identity are Morita equivalent if and only if there exists an additive equivalence between the category of finitely generated projective $\Lambda$-modules and the category of finitely generated projective $\Gamma$-modules \cite[Proposition 2.8]{ausla1}.

In ring theory and representation theory of associative algebras, many authors make the assumption that all rings and algebras have an identity element. There are, however, lots of natural constructions in ring theory and representation theory of associative algebras which share almost all properties of rings and algebras with identity except the property of having an identity element. Such constructions include ideals, infinite direct sums of rings, category rings where the category has infinitely many objects (see e.g. \cite[Proposition 4]{L}), Leavitt path algebras of quivers with infinitely many vertices (see e.g. \cite[Lemma 1.2.12(iv)]{aam}), path algebras of quivers with infinitely many vertices (see e.g. \cite{BLP}) and linear transformations of finite rank of an infinite dimensional vector space. For many classes of rings and algebras lacking an identity element there still exist weaker versions of identity elements. One of the important class of such rings and algebras is the class of rings (algebras) with local units. A ring $R$ is called a {\it ring with local units} if every finite subset of $R$ contained in a subring of the form $eRe$ where $e^2 = e \in R$ \cite{am}. Rings with local units occur widely in mathematics, for example in algebra, functional analysis and category theory. Infinite direct sums of rings, category rings where the category has infinitely many objects, Leavitt path algebras of quivers with infinitely many vertices, path algebras of quivers with infinitely many vertices and linear transformations of finite rank of an infinite dimensional vector space are important examples of rings (algebras) with local units.

Natural classes of rings with local units appeared in functor categories. Gabriel, in \cite{ga}, proved that for any ring $\Lambda$ with identity the functor category Mod$(\Lambda$-mod$)$ is equivalent to the category of unitary left modules over a ring with local units. Wisbauer,  in \cite{wis}, gave a characterization of rings of finite representation type $\Lambda$ in terms of the rings with local units associated to Mod$(\Lambda$-mod$)$ (see also \cite{ausla2}). Fuller in \cite{f} showed that a ring $\Lambda$ with identity is left pure semisimple if and only if the ring with local units associated to Mod$(\Lambda$-mod$)$ is left perfect.

Gene D. Abrams was the first person who studied the theory of Morita equivalence for the class of rings with local units \cite{a}. Let $R$ and $S$ be two rings with local units. He proved
the category of unitary left $R$-modules and the category of unitary left $S$-modules are equivalent
if and only if there exists a unitary left $R$-module $P$ which is a generator and $S$ is isomorphic to the ring of certain endomorphisms of $P$ \cite[Theorem 4.2]{a}. In this  case $P$ is the
direct limit of a given kind of system of finitely generated projective modules. Later P. N. \'{A}nh and L. M\'{a}rki in \cite{am} extended the Abrams results and obtained the two-sided characterizations of Morita equivalence for rings with local units (see \cite[Theorems 2.1, 2.2 and 2.4]{am}).

In this paper we continue the study of Morita equivalence and Morita duality for rings with local units. Let $R$ be a ring with local units. A unitary left $R$-module $P$ is called {\it locally projective} if $P$ is a direct limit of a split direct system $(P_i)_{i \in I}$, where each $P_i$ is a finitely generated projective unitary left $R$-module \cite{am}. Let $S$ be a ring with local units and $M$ be an $R$-$S$-bimodule. We denote by $\mathscr{C}_M$ the full subcategory of $R$Mod (i.e. the category of unitary left  R-modules), whose objects are the unitary left $R$-modules $Z'$ such that the sum of all $R$-submodules $K$ of $Z'$ with $S{\rm Hom}_R(P,K)=0$ is zero and the submodule $\sum_{g \in S{\rm Hom}_R(P, Z')}{\rm Im}g$ of $Z'$ is equal to $Z'$. First, we extend the Auslander's result on Morita theory (\cite[Proposition 6.5]{ausla1}) to rings with local units which is a reformulation of the Morita theorem. More precisely we prove the following theorem.

\begin{theorem a*}$($Theorem \ref{a8}$)$
Let $R$ be a ring with local units and $P$ be a locally projective left $R$-module. Assume that $S$ is a subring of ${\rm End}_R(P)$ such that $P\in {\rm Mod}S$, $S{\rm End}_R(P)=S$ and $Pf$ is a finitely generated left $R$-module for each $f^2=f \in S$. Then the functor $S{\rm Hom}_R(P,-):\mathscr{C}_P \rightarrow S{\rm Mod}$ is an equivalence of categories.
\end{theorem a*}

Let ${\rm Proj}R$ (resp., ${\rm proj}R$) be a full subcategory of $R$Mod consisting of (resp., finitely generated) unitary left $R$-modules which are projective in $R$Mod. We prove the following theorem which gives an equivalent condition for Morita equivalence and extend \cite[Proposition 2.8]{ausla1}.

\begin{theorem b*}$($Theorem \ref{d}$)$
Let $R$ and $S$ be two rings with local units. Then the following conditions are equivalent.
\begin{itemize}
\item[$(a)$] The rings $R$ and $S$ are Morita equivalent.
\item[$(b)$] There exists an additive equivalence ${\rm Proj}R\rightarrow {\rm Proj}S$ which preserves and reflects finitely generated projective unitary modules.
\item[$(c)$] There exists an additive equivalence ${\rm proj}R \rightarrow {\rm proj}S$.
\end{itemize}
\end{theorem b*}

Dual to the Morita theory is the theory of duality between the module categories, where the functors used are contravariant rather than covariant, which is called Morita duality theory. This theory, though similar in form, has significant differences because there is no duality between the categories of modules for any rings, although dualities may exist for subcategories. P. N. \'{A}nh and C. Menini in \cite{ame} modified the concept of linear compactness and gave a characterization of Morita duality over rings with local units which is analogous to the classical case of rings with identity (see \cite[Theorem 1]{ame}).

A ring $R$ is called left (resp., right) locally finite if every finitely generated unitary left (resp., right) R-module has finite length. In this paper by using of locally finite rings and Morita duality bimodules (see Definition \ref{Def1}) we give an equivalent condition for Morita duality of rings with local units. More precisely we prove the following theorem which is a generalization of Azumaya-Morita duality theorem.

\begin{theorem c*}$($Theorem \ref{rdu3}$)$\label{C}
Let $R$ and $S$ be two rings with local units. Then the following conditions are equivalent.
\begin{itemize}
\item[$(a)$] There exists a duality between the category of finitely generated unitary left $R$-modules and the category of finitely generated unitary right $S$-modules.
\item[$(b)$] $R$ is a left locally finite ring and there exists a Morita duality $R$-$S$-bimodule.
\item[$(c)$] $S$ is a right locally finite ring and there exists a Morita duality $R$-$S$-bimodule.
\end{itemize}
In this case, if $U$ is a Morita duality $R$-$S$-bimodule, then the duality  functors are isomorphic to ${\rm Hom}_{R}(-,U)S: R{\rm mod} \rightarrow {\rm mod}S$ and $R{\rm Hom}_{S}(-,U): {\rm mod}S \rightarrow R{\rm mod}$.
\end{theorem c*}

Let $R$ be a ring with local unit. $R$ is called {\it left Morita} if there exists a ring $R'$ with local units and a duality $R{\rm mod} \rightarrow {\rm mod}R'$. Finally, as a consequence of Theorems B and C we prove following corollary.

\begin{corollary a*}$($Corollary \ref{rdu4}$)$
Let $R$ and $S$ be two rings with local units. Then the following conditions are equivalent.
\begin{itemize}
\item[$(a)$] There exists a duality between the category of finitely generated unitary left $R$-modules and the category of finitely generated unitary right $S$-modules.
\item[$(b)$] $R$ is a left Morita ring and there exists a duality ${\rm inj}R \rightarrow {\rm proj}S^{\rm op}$.
\end{itemize}
\end{corollary a*}

In \cite{FN} and forthcoming paper, the authors study some ring and module properties by using some homological conditions. This idea goes back to Auslander. Auslander in his fames theorem proved that there is a bijection between the Morita equivalence classes of non-semisimple representation finite rings and the Morita equivalence classes of non-semisimple Auslander rings, where an artinian ring $R$ is called Auslander ring if the global dimension of $R$ is less than or equal 2 and the dominant dimension of $R$ is greater than or equal 2 (see \cite[Theorem 4.6 and Corollary 4.7]{ausla2}). This result is one of the fundamental results in representation theory. In our project we prove similar bijections for some important classes of rings and by using these bijections we will try to classify some important classes of rings. The functor rings that appeared in the proof of the Auslander's bijection are unital, but in our case the functor rings are not unital, but they are rings with local units. Therefore we need the results of this paper in \cite{FN} and our forthcoming paper. We believe the results of this paper also has independent interest.

The paper is organized as follows. In Section 2, we prove some preliminary results that will be needed later in the paper. In Section 3, we first give a generalization of the Morita theorem for rings with local units (see Theorem \ref{a8}). Then we show that a ring $R$ with local units is Morita equivalent to a ring $S$ with local units if and only if there exists an additive equivalence between the full subcategory of finitely generated projective unitary left $R$-modules and the full subcategory of finitely generated projective unitary left $S$-modules (see Theorem \ref{d}). In Section 4, we prove the Azumaya-Morita duality theorem to rings with local units (see Theorem \ref{rdu3}). Moreover, by using some results of section 3, we show that there exists a duality between the category of finitely generated unitary left $R$-modules and the category of finitely generated unitary right $S$-modules if and only if $R$ is a left Morita ring and there exists a duality ${\rm inj}R \rightarrow {\rm proj}S^{\rm op}$ (see Corollary \ref{rdu4}).

%\maketitle

%%%%%%%%%%%%%%%%%%%%%%%%%%%%%%%%%%%%%%
\subsection{Notation }
Throughout this paper all rings are associative with local units. Let $R$ be a ring. We write all homomorphisms of left (resp., right) $R$-modules on the right (resp., left), so $fg$ (resp., $g \circ f$) means ``first $f$ then $g$". Moreover we write all ring homomorphisms on the right. We denote by $R$-Mod (resp., Mod-$R$) the category of all left (resp., right) $R$-modules.  A left $R$-module $M$ is called {\it unitary} if $RM=M$. We denote by $R$Mod (resp., Mod$R$) the category of all unitary left (resp., right) $R$-modules. Note that the category of unitary modules over a ring with local units is a Grothendieck category \cite[Page 1]{gra}. Also we denote by $R$mod (resp., mod$R$) the category of all finitely generated unitary left (resp., right) $R$-modules. Note that if a unitary module $M$ generates by a set $X$, then we denoted it by $<X>$.  We write $\bigoplus_A M$ and $\prod_A M$ for the direct sum of cardinal $A$ copies of an $R$-module $M$ and the direct product of cardinal $A$ copies of an $R$-module $M$, respectively. Let $N$ be a unitary left $R$-module. We denote by $E(N)$ the injective hull of $N$ in $R$Mod. We denote by ${\rm Proj}R$ (resp., ${\rm proj}R$) the full subcategory of $R$Mod consisting of (resp., finitely generated) unitary left $R$-modules which are projective in $R$Mod.  Also we denote by ${\rm inj}R$ the full subcategory of $R$Mod consisting of finitely generated unitary left $R$-modules which are injective in $R$Mod. Moreover we denote by ${\rm proj}R^{\rm op}$ the full subcategory of Mod$R$ consisting of finitely generated unitary right $R$-modules which are projective in Mod$R$. Let $S$ be a ring and $P$ be an $R$-$S$-bimodule. We denote by ${\rm Ker}_P$  the full subcategory of $R$Mod consisting of all unitary left $R$-modules $Z$ such that $S$Hom$_R(P,Z)=0$. Let $Z$ be a unitary left $R$-module. We denoted by ${\rm st}_P(Z)$ the sum of all $R$-submodules $K$ of $Z$ with $S{\rm Hom}_R(P,K)=0$. Moreover we recall that the submodule ${\rm sTr}(P,Z)= \sum_{g \in S{\rm Hom}_R(P, Z)}{\rm Im}g$ of $Z$ is called {\it s-trace of $P$ {\rm in} $Z$}. We denote by $\mathscr{C}_P$ the full subcategory of $R$Mod, whose objects are the unitary left $R$-modules $M$ such that ${\rm st}_P(M)=0$ and ${\rm sTr}(P,M)=M$. We recall that $P$ is called {\it unitary $R$-$S$-bimodule} if it is unitary on both sides. Let $f : X \rightarrow Y$ be an $R$-module homomorphism and assume that $M$ is an $R$-submodule of $X$. We denote by $f\mid_M$ the restriction of $f$ to $M$.

\section{Preliminaries}

Let $P$ be an $R$-$S$-bimodule and $M$ be a left $R$-module. Then by \cite[Theorem 4.8]{hu}, ${\rm Hom}_R(P,M)$ is a left $S$-module. It is easy to see that $S{\rm Hom}_R(P,M)$ is the largest unitary left $S$-submodule of ${\rm Hom}_R(P,M)$.  By $S{\rm Hom}_{R}(P,-)$ we denote the functor induced by the mapping $N \mapsto S{\rm Hom}_{R}(U, N)$ which is a left exact functor (see \cite[Page 3]{am}). The following Lemma \ref{a0} can be proved along the same lines as in the case of rings with identity (see \cite[Page 261]{and}). Therefore we present it without proof.

\begin{Lem}\label{a0}
Let $R$ and $S$ be two rings and $P$ be an $R$-$S$-bimodule which is a unitary left $R$-module. Then the pair of functors
\begin{center}
$P\otimes_S-:S{\rm Mod} \rightarrow R{\rm Mod}$ \hspace{2mm} and \hspace{2mm} $S{\rm Hom}_R(P,-):R{\rm Mod} \rightarrow S{\rm Mod}$
\end{center}
is adjoint via the isomorphisms {\rm(}for $M \in R${\rm Mod} and $N \in S${\rm Mod)}
\begin{center}
$\psi_{N,M}:{\rm Hom}_S(N,S{\rm Hom}_R(P,M)) \rightarrow {\rm Hom}_R(P\otimes_SN,M)$ via $\beta \mapsto \left[ \sum_{i=1}^tx_i\otimes n_i\mapsto \sum_{i=1}^t(x_i)(n_i)\beta \right].$
\end{center}
\end{Lem}

Let $(I, \leq)$ be a quasi-ordered directed set. A direct system of unitary left $R$-modules\\ ${(X_i, \varphi_{ij})_{i,j \in I}}$ is called {\it split} if for each $i \leq j$ in $I$, there exists an $R$-module homomorphism $\psi_{ji}: X_j \rightarrow X_i$ such that $\varphi_{ij}\psi_{ji}=id_{X_i}$ and $\psi_{kj}\psi_{ji}=\psi_{ki}$ for each $i \leq j \leq k$ in $I$ (see \cite{v}). We denoted it by $\lbrace X_i, \varphi_{ij}, \psi_{ji}~|~ i,j \in I \rbrace$.

\begin{Lem}\label{a1}
Let $R$ be a ring and let $\lbrace P_i, \varphi_{ij}, \psi_{ji}~|~ i,j \in I \rbrace$ be a split direct system of finitely generated unitary left $R$-modules. Assume that $\lbrace \varphi_i: P_i \rightarrow P~|~ i \in I\rbrace$ is a direct system of $R$-module homomorphisms from $(P_i, \varphi_{ij})_{I}$ and let $P$ together with the $R$-module homomorphisms $\varphi_i$ form a direct limit of $(P_i, \varphi_{ij})_{I}$. Then
\begin{itemize}
\item[$(a)$] there exists a unique $R$-module homomorphism $\psi_i:P \rightarrow P_i$ such that $\varphi_i\psi_i=id_{P_i}$ and $\psi_j\psi_{ji}=\psi_i$ for each $i \leq j$ in $I$.
\item[$(b)$] $S = {\underrightarrow{\lim}}{\rm End}_R(P_i)= \lbrace g \in {\rm End}_R(P) ~|~ g ~{\rm factors ~through ~one ~of ~the ~projections} ~\psi_i \rbrace$ is a subring of ${\rm End}_R(P)$ such that  $S{\rm End}_R(P)=S$, $P$ is a unitary $R$-$S$-bimodule and $Pf$ is a finitely generated unitary left $R$-module for each $f^2=f \in S$.
\end{itemize}
\end{Lem}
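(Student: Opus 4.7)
The plan for (a) is to invoke the universal property of the direct limit. For a fixed $k \in I$, I construct a cocone $(\alpha_j^{(k)}: P_j \to P_k)_{j \in I}$ by choosing, for each $j$, any $m \in I$ with $m \geq j, k$ and setting $\alpha_j^{(k)} := \varphi_{jm}\psi_{mk}$. Independence of the choice of $m$ follows by passing to $l \geq m, m'$ and applying the identities $\varphi_{ml}\psi_{lm} = \id_{P_m}$, $\varphi_{jm}\varphi_{ml} = \varphi_{jl}$, and $\psi_{lm}\psi_{mk} = \psi_{lk}$, which reduce both $\varphi_{jm}\psi_{mk}$ and $\varphi_{jm'}\psi_{m'k}$ to $\varphi_{jl}\psi_{lk}$. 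The cocone condition $\varphi_{ij}\alpha_j^{(k)} = \alpha_i^{(k)}$ for $i \leq j$ is immediate once $m$ is taken above both $j$ and $k$. The universal property then yields $\psi_k: P \to P_k$ with $\varphi_j\psi_k = \alpha_j^{(k)}$; specializing to $j = k$ with $m = k$ gives $\varphi_k\psi_k = \psi_{kk} = \id_{P_k}$.

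For (b), with $\psi_{ji}$ denoting the original splittings, I verify $\psi_j\psi_{ji} = \psi_i$ by checking agreement after precomposition with every $\varphi_l$. Using (a), $\varphi_l(\psi_j\psi_{ji}) = \alpha_l^{(j)}\psi_{ji}$; choosing $m \geq l, j$ so that $\alpha_l^{(j)} = \varphi_{lm}\psi_{mj}$, this becomes $\varphi_{lm}\psi_{mj}\psi_{ji} = \varphi_{lm}\psi_{mi}$ by the split-system compatibility $\psi_{mj}\psi_{ji} = \psi_{mi}$ (for $i \leq j \leq m$), which equals $\alpha_l^{(i)} = \varphi_l\psi_i$. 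The uniqueness in the universal property then forces the desired equality of maps $P \to P_i$.

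For (c), I identify $S$ concretely as the union of the images of the injective maps ${\rm End}_R(P_i) \to {\rm End}_R(P)$, $f \mapsto \psi_i f\varphi_i$; these are compatible with the transition maps $f \mapsto \psi_{ji}f\varphi_{ij}$ for $i \leq j$ by (b) and the direct-system identity $\varphi_{ij}\varphi_j = \varphi_i$, so this union coincides with ${\underrightarrow{\lim}}\,{\rm End}_R(P_i)$. To match it with the description ``$g$ factors through some $\psi_i$'', one direction is visible; for the converse, if $g = \psi_i h$ with $h: P_i \to P$, then finite generation of $P_i$ forces $h = h'\varphi_k$ for some $k$ and $h': P_i \to P_k$, and passing to $j \geq i, k$ and invoking (b) writes $g = \psi_j(\psi_{ji}h'\varphi_{kj})\varphi_j$, placing $g$ in the desired form.

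All remaining claims rest on the identity $\varphi_i\psi_i = \id_{P_i}$, a specialization of (a). Given $s_1, s_2 \in S$, I pass to a common index $l$ to write $s_\nu = \psi_l f_\nu \varphi_l$ for $\nu = 1, 2$; then $s_1 s_2 = \psi_l f_1 (\varphi_l\psi_l) f_2 \varphi_l = \psi_l (f_1 f_2)\varphi_l \in S$ and $s_1 + s_2 = \psi_l (f_1 + f_2)\varphi_l \in S$, so $S$ is a subring. The equality $S\,{\rm End}_R(P) = S$ follows because $(\psi_i f\varphi_i)g = \psi_i(f\varphi_i g)$ still factors through $\psi_i$, while $s = s\cdot\id_P$ gives the reverse inclusion. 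For unitarity $PS = P$, any $x = x_i\varphi_i \in P$ satisfies $x(\psi_i\varphi_i) = x_i(\varphi_i\psi_i)\varphi_i = x$ with $\psi_i\varphi_i \in S$. Finally, for an idempotent $f \in S$, write $f = \psi_j e\varphi_j$ with $e \in {\rm End}_R(P_j)$; flanking $f^2 = f$ by $\varphi_j$ on the left and $\psi_j$ on the right forces $e^2 = e$, and since $P\psi_j = P_j$, we obtain $Pf = P_j e\varphi_j$, a homomorphic image of the finitely generated module $P_j$, hence finitely generated. The main obstacle is purely bookkeeping: under the right-action convention, the many triple compositions must be tracked carefully, but once $\varphi_i\psi_i = \id_{P_i}$ is secured the whole argument is essentially formal.
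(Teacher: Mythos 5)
Your proof is correct and follows essentially the same route as the paper: the same explicit splittings $\psi_k$ obtained from the cocone $\varphi_{jm}\psi_{mk}$, the same realization of $S$ as the endomorphisms $\psi_i f\varphi_i$ (equivalently, those satisfying $\psi_i\varphi_i g\psi_i\varphi_i=g$), and the same arguments for the factorization characterization, the ring structure, unitarity, and the finite generation of $Pf$. The only differences are presentational — you prove (a) and (b) directly where the paper cites Vercruysse, and you identify $S$ with $\underrightarrow{\lim}\,{\rm End}_R(P_i)$ as a directed union of injective images rather than verifying the universal property against an arbitrary ring $T$ as the paper does.
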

\begin{proof} $(a)$. It follows from \cite[Proposition 1.1]{v}.\\
$(b)$. It follows from \cite[Page 11]{am}.
\end{proof}

We recall from \cite{am} that a unitary left $R$-module $P$ is called {\it locally projective} if $P$ is a direct limit of a split direct system $(P_i)_{i \in I}$, where each $P_i$ is a finitely generated  projective unitary left $R$-modules.  Note that  $_RR$ is a locally projective module since it is a direct limit of a split direct system $\lbrace Re, \alpha_{ee'}, \rho_{e'e}~|~e,e' \in E\rbrace$, where $E$ be the set of all idempotents of $R$ and each $\alpha_{ee'}:Re\rightarrow Re'$ and $\rho_{e'e}:Re' \rightarrow Re$ are the multiplication maps. Also $R$ together the canonical ring homomorphisms $t_e: eRe \rightarrow R$ form a direct limit of a direct system $(eRe, t_{ee'})_{e,e' \in E}$, where each $t_{ee'}:eRe \rightarrow e'Re'$ is the canonical injection (see \cite{am}).

\begin{Rem}{\rm
Let $R$ and $S$ be two rings with local units and $P$ be a unitary $R$-$S$-bimodule. Assume that $P$ is a locally projective unitary left $R$-module and   $Pf$ is a finitely generated left $R$-module, where $f$ is an idempotent of $S$. Since $P$ is a locally projective module, $P$ is a direct limit of a split direct system $\lbrace P_i, \varphi_{ij}, \psi_{ji}~|~ i,j \in I \rbrace$, where each $P_i$ is a finitely generated  projective unitary left $R$-module.  Assume that $\lbrace \varphi_i: P_i \rightarrow P~|~ i \in I\rbrace$ is a direct system of $R$-module homomorphisms from $(P_i, \varphi_{ij})_{I}$ and let $P$ together with the $R$-module homomorphisms $\varphi_i$ form a direct limit of $(P_i, \varphi_{ij})_{I}$. Then by Lemma 2.2, there exists an $R$-module homomorphism $\psi_i:P \rightarrow P_i$ such that $\varphi_i\psi_i=id_{P_i}$ for each $i \in I$. This implies that each $\varphi_i$ is a split monomorphism. Since $Pf$ is a finitely generated submodule of $P$, by \cite[Proposition 24.3(3)]{wi}, $Pf \subseteq {\rm Im}\varphi_j$ for some $j \in I$.  Since $\varphi_j$ is a split monomorphism, ${\rm Im}\varphi_j$ is a finitely generated projective unitary direct summand of $P$. Set $P^{'}= {\rm Im}\varphi_j$. It is easy to see that $P^{'}= Pf \oplus P^{'}(1-f)$, where $P^{'}(1-f)= \lbrace x \in P^{'}~|~ xf=0 \rbrace$. This yields that $Pf$ is a finitely generated projective unitary direct summand of $P$.}
\end{Rem}

\begin{Lem}\label{a4}
Let $R$ and $S$ be two rings and $P$ be an $R$-$S$-bimodule. If $P$ is a locally projective left $R$-module and $Pf$ is a finitely generated left $R$-module for each $f^2=f \in S$, then the functor $S{\rm Hom}_R(P,-):R{\rm Mod} \rightarrow S{\rm Mod}$ is exact.
\end{Lem}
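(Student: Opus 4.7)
The plan is to note that left exactness of $S\Hom_R(P,-)$ is already recorded in the remark following Lemma~\ref{a0}, so it suffices to show that $S\Hom_R(P,g)$ is surjective whenever $g:M\to N$ is an epimorphism in $R$Mod. Pick an arbitrary $h\in S\Hom_R(P,N)$; since this hom group is a unitary left $S$-module, I can choose an idempotent $f\in S$ with $fh=h$. Using the $S$-action formula $(x)(sh)=(xs)h$, this identity reads $(x)h=(xf)h$ for every $x\in P$, so $h$ vanishes on $P(1-f)$ and factors canonically as $h=\pi_f\circ\bar h$, where $\pi_f:P\to Pf$, $x\mapsto xf$, is the projection onto the direct summand $Pf$ (cf.\ Remark~\ref{aa9}) and $\bar h:=h|_{Pf}$.

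Next I would invoke the two structural ingredients: by Remark~\ref{aa9}, the hypothesis that $Pf$ is finitely generated together with local projectivity of $P$ makes $Pf$ a finitely generated projective direct summand of $P$, and by Remark~\ref{re6}, this $Pf$ is also projective as an object of $R\text{-Mod}$. Since $g$ is (in particular) a surjection of $R$-modules, projectivity of $Pf$ yields a lift $\widetilde h_0:Pf\to M$ with $\widetilde h_0\,g=\bar h$.

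Finally, I would set $\widetilde h:=\pi_f\circ\widetilde h_0:P\to M$ and verify two things by a direct calculation. For any $x\in P$,
\[
(x)(f\widetilde h)=(xf)\widetilde h=(xf\cdot f)\widetilde h_0=(xf)\widetilde h_0=(x)\widetilde h,
\]
so $f\widetilde h=\widetilde h$ and $\widetilde h\in S\Hom_R(P,M)$; and
\[
(x)(\widetilde h\,g)=\bigl((xf)\widetilde h_0\bigr)g=(xf)(\widetilde h_0\,g)=(xf)\bar h=(xf)h=(x)h,
\]
so $S\Hom_R(P,g)(\widetilde h)=h$. This exhibits the required preimage.

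The only nontrivial input is the step that promotes $Pf$ from a mere unitary summand of $P$ to a finitely generated projective module — and that is exactly what locally projectivity plus the finite-generation hypothesis on $Pf$ were designed to supply (through Remarks~\ref{aa9} and~\ref{re6}). Everything else is a diagram chase using the basic identity $h=fh\iff h\text{ factors through }\pi_f$, which is the standard mechanism by which idempotents in the endomorphism ring of a unitary hom-group encode factorizations through summands.
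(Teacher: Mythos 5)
Your proof is correct and follows essentially the same route as the paper's: both reduce to surjectivity of $S{\rm Hom}_R(P,g)$, use Remark \ref{aa9} to realize $Pf$ as a finitely generated projective direct summand, lift the restriction of the given map to $Pf$ through the epimorphism, and precompose with the projection $P\to Pf$, checking via the identity $fh=h$ that the lift lands in $S{\rm Hom}_R(P,M)$. The only cosmetic difference is that the paper lifts the composite $Pf\hookrightarrow P\to L$ directly rather than naming the restriction $\bar h$, which is the same map.
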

\begin{proof}
Let $P$ be a locally projective left $R$-module and assume that $Pf$ is a finitely generated left $R$-module for each $f^2=f \in S$. We show that the functor $S{\rm Hom}_R(P,-):R{\rm Mod} \rightarrow S{\rm Mod}$ is exact. It is enough to show that the functor $S{\rm Hom}_R(P,-):R{\rm Mod} \rightarrow S{\rm Mod}$ preserves epimorphisms. Let $g: N \rightarrow L$ be an epimorphism in $R$Mod. Assume that $\gamma \in S{\rm Hom}_R(P,L)$. Then there exists an idempotent $f \in S$ such that $f \gamma=\gamma$. Since $P$ is a locally projective left $R$-module and $Pf$ is a finitely generated left $R$-module, $Pf$ is a finitely generated projective direct summand of $P$. Consequently there exists an $R$-module homomorphism $h':Pf \rightarrow N$ such that $h'g=\epsilon \gamma$, where $\epsilon:Pf \rightarrow P$ is the canonical injection. Let $\pi:P \rightarrow Pf$ be the canonical projection and $x \in P$. Since $(x)\pi h'=(xf)h'=(xf)\pi h'=(x)f\pi h'$, $\pi h' \in S{\rm Hom}_R(P,N)$. Since $(x)\pi h'g=(x)\pi \epsilon \gamma = (xf)\gamma= (x)f\gamma=(x)\gamma$, $\pi h'g=\gamma$. Therefore $S{\rm Hom}_R(P,g)$ is an epimorphism.
\end{proof}

\begin{Lem}\label{a2}
Let $R$ and $S$ be two rings and $P$ be an $R$-$S$-bimodule. If  $P$ is a locally projective left $R$-module and $Pf$ is a finitely generated left $R$-module for each $f^2=f \in S$, then the functor $R{\rm Hom}_S(G_P,-): S{\rm Mod}\rightarrow R{\rm Mod}$ is a right adjoint of the functor $S{\rm Hom}_R(P,-): R{\rm Mod} \rightarrow S{\rm Mod}$, where $G_P=S{\rm Hom}_R(P,R)$.
\end{Lem}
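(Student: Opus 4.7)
The plan is to factor $S\Hom_R(P,-)$ through a tensor product with the $S$-$R$-bimodule $G_P$ and then invoke the classical tensor--Hom adjunction. The heart of the argument is a natural isomorphism of functors $R{\rm Mod}\to S{\rm Mod}$,
\begin{equation*}
\mu_M\colon G_P\otimes_R M\;\longrightarrow\;S\Hom_R(P,M),\qquad g\otimes m\longmapsto\bigl[\,x\mapsto((x)g)\,m\,\bigr].
\end{equation*}
Granting $\mu_M$, the standard tensor--Hom adjunction for the bimodule $G_P$ gives $\Hom_S(G_P\otimes_R M,N)\cong\Hom_R(M,\Hom_S(G_P,N))$. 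Since $M$ is unitary, the image of every $R$-linear map $\beta\colon M\to\Hom_S(G_P,N)$ already lies in the unitary part $R\Hom_S(G_P,N)$ (if $m=em$ with $e^2=e\in R$, then $(m)\beta=e\cdot(m)\beta$). Composing the three natural isomorphisms yields the desired adjunction between $S\Hom_R(P,-)$ and $R\Hom_S(G_P,-)$.

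To produce $\mu_M$ I would build it ``one idempotent at a time'' and then pass to a colimit. Fix an idempotent $e\in S$. By Remark \ref{aa9}, $P=Pe\oplus P(1-e)$ with $Pe$ a finitely generated projective direct summand of $P$; let $\iota_e\colon Pe\hookrightarrow P$ and $\pi_e\colon P\twoheadrightarrow Pe$ denote the inclusion and projection. Set $G_{P,e}=\{g\in G_P : eg=g\}$ and $H_{M,e}=\{f\in S\Hom_R(P,M) : ef=f\}$. Precomposition with $\iota_e$ gives natural bijections $G_{P,e}\cong\Hom_R(Pe,R)$ and $H_{M,e}\cong\Hom_R(Pe,M)$, with inverse being precomposition with $\pi_e$. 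Because $Pe$ is a finitely generated projective unitary left $R$-module (hence a direct summand of some $(Rf)^n$ for an idempotent $f\in R$, so that the check reduces to the elementary isomorphism $fR\otimes_R M\cong fM$ given by $fr\otimes m\mapsto frm$), the dual-basis argument supplies a natural isomorphism $\Hom_R(Pe,R)\otimes_R M\cong\Hom_R(Pe,M)$. Chasing the identifications shows that $\mu_M$ restricts to an isomorphism $\mu_{M,e}\colon G_{P,e}\otimes_R M\xrightarrow{\sim}H_{M,e}$ for every idempotent $e\in S$.

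The idempotents of $S$ form a filtered poset under $e\le e'\iff ee'=e'e=e$ (directedness comes from the local units), and for $e\le e'$ one has $G_{P,e}\subseteq G_{P,e'}$ and $H_{M,e}\subseteq H_{M,e'}$; by the very definition of the unitary parts, $G_P=\underset{\longrightarrow}{\lim}_e\,G_{P,e}$ and $S\Hom_R(P,M)=\underset{\longrightarrow}{\lim}_e\,H_{M,e}$. Since $-\otimes_R M$ commutes with direct limits, the colimit of the $\mu_{M,e}$ is precisely $\mu_M$, which is thereby an isomorphism; naturality in $M$ descends from naturality at each finite stage. The main obstacle is exactly this construction of $\mu_M$: one must verify the classical finitely generated projective identity over rings with local units and coordinate the pieces $\mu_{M,e}$ across the filtered system. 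Both hypotheses on $P$ --- local projectivity and finite generation of each $Pf$ --- are used only here, via Remark \ref{aa9}. Once $\mu_M$ is in place, the adjunction is a routine formal consequence of the two facts noted in the first paragraph.
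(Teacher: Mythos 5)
Your proposal is correct and follows essentially the same route as the paper: both proofs hinge on the natural isomorphism $G_P\otimes_R M\cong S{\rm Hom}_R(P,M)$ (the paper's $\eta_M$), obtained by reducing to the finitely generated projective direct summands $Pf$ via Remark \ref{aa9} and the dual-basis identity ${\rm Hom}_R(Pf,R)\otimes_R M\cong{\rm Hom}_R(Pf,M)$, and then conclude by the tensor--Hom adjunction for the bimodule $G_P$ (Lemma \ref{a0}), whose proof already contains your observation that unitarity of $M$ forces the image of any $\beta\colon M\to{\rm Hom}_S(G_P,N)$ into $R{\rm Hom}_S(G_P,N)$. The only difference is presentational: you assemble $\eta_M$ as a filtered colimit of the isomorphisms indexed by idempotents of $S$, whereas the paper verifies injectivity and surjectivity of the global map directly, citing \cite[Propositions 1.2 and 1.5]{am} for the finitely generated projective case.
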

\begin{proof}
Let $M$ be a unitary left $R$-module and set $G_P=S{\rm Hom}_R(P,R)$. It is easy to see that $G_P$ is a $S$-$R$-bimodule which is a unitary left $S$-module. Let $m \in M$ and $\gamma \in G_P$. Then there exists $f^2=f \in S$ such that $f\gamma = \gamma$. Since $(xf)\gamma m = (x)(f\gamma)m = (x)\gamma m$ for each $x \in P$, it is easy to check that the mapping $G_P \times M \rightarrow S{\rm Hom}_R(P,M)$ defined by $(\gamma,m) \longmapsto (\gamma \otimes m)\eta_M:P \rightarrow M$ via $x \longmapsto (x)\gamma m$ is $R$-balanced.  It follows that there exists a unique homomorphism of abelian groups $\eta_M: G_P \otimes_R M  \rightarrow S{\rm Hom}_R(P,M)$ defined by $\sum_{i=1}^t\gamma_i\otimes m_i \longmapsto \sum_{i=1}^t(\gamma_i\otimes m_i)\eta_M$. It is easy to see that  $\eta_M$ is an $S$-module homomorphism. Assume that $\sum_{i=1}^t\gamma_i\otimes m_i \in G_P \otimes_R M$ such that $(\sum_{i=1}^t\gamma_i\otimes m_i)\eta_M = 0$. Then $\sum_{i=1}^t (x)\gamma_i m_i = 0$ for each $x \in P$. We know that there exists an idempotent $f^2=f \in S$ such that $f \gamma_i = \gamma_i$ for each $1 \leq i \leq t$. Hence for each $1 \leq i \leq t$, $\gamma_i=\pi \varepsilon \gamma_i$, where $\pi: P \rightarrow Pf$ is the canonical projection and $\varepsilon: Pf \rightarrow P$ is the canonical injection. Set ${\gamma}^{'}_i=\varepsilon \gamma_i$. Since $Pf$ is a finitely generated projective left $R$-module, it is not difficult to see that by \cite[Propositions 1.2 and 1.5]{am}, $\sum_{i=1}^t{\gamma}^{'}_i\otimes m_i =0$ in ${\rm Hom}_R(Pf,R) \otimes_R M$. Since the mapping ${\rm Hom}_R(Pf,R) \rightarrow f{\rm Hom}_R(P,R)$ via $\alpha \mapsto f \pi \alpha$ is an $R$-module isomorphism, $\eta_M$ is a monomorphism. Let $\phi \in S{\rm Hom}_R(P,M)$. Then there exists an idempotent $f \in S$ such that $f\phi = \phi$. Hence $\phi\mid_{P(1-f)} = 0$. Since $Pf$ is a finitely generated projective left $R$-module, it follows that by \cite[Propositions 1.2 and 1.5]{am}, there exists $\sum_{i=1}^t \alpha'_i \otimes m_i \in {\rm Hom}_R(Pf,R)\otimes_R M$ such that $\sum_{i=1}^t(xf){\alpha'_i}m_i = (xf)\phi$ for each $x \in P$.  We define the mapping $\alpha_i:P \rightarrow R$ via $x \mapsto (xf)\alpha'_i$. Clearly each $\alpha_i$ is well-defined and $\alpha_i \in G_P$. Since for each $x \in P$ $$\sum_{i=1}^t(x)\alpha_im_i = \sum_{i=1}^t(xf)\alpha'_im_i = (xf)\phi = (x)(f\phi) = (x)\phi,$$  $\eta_M$ is surjective. Consequently $\eta_M$ is an $S$-module isomorphism. Therefore for each unitary left $S$-module $N$, ${\rm Hom}_S(\eta_M,N):{\rm Hom}_S(S{\rm Hom}_R(P,M),N) \rightarrow {\rm Hom}_S(G_P\otimes_RM,N)$ is an isomorphism of abelian groups. Let $h:M \rightarrow M'$ be an $R$-module homomorphism. Since for each $\sum_{i=1}^t\beta_i\otimes m_i \in G_P \otimes_RM$ and $x \in P$ $$(x)(\sum_{i=1}^t\beta_i\otimes (m_i)h)\eta_{M'}=\sum_{i=1}^t(x)\beta_i(m_i)h=(\sum_{i=1}^t(x)\beta_im_i)h=(x)(\sum_{i=1}^t\beta_i\otimes m_i)\eta_Mh,$$
the following diagram is commutative
\begin{displaymath}
\xymatrixcolsep{5pc}\xymatrix{
{\rm Hom}_S(S{\rm Hom}_R(P,M),N) \ar[r]^<<<<<<<{{\rm Hom}_S(\eta_{M},N)}  &
{\rm Hom}_S(G_P\otimes_R M,N)  \\
{\rm Hom}_S(S{\rm Hom}_R(P,M'),N) \ar[u]^{{\rm Hom}_S(S{\rm Hom}_R(P,h),N)}  \ar[r]_<<<<<<<{{\rm Hom}_S(\eta_{M'},N)} & {\rm Hom}_S(G_P\otimes_R M',N).\ar[u]_{{\rm Hom}_S(1\otimes h,N)}}
\end{displaymath}
Therefore by Lemma \ref{a0}, the functor $R$Hom$_S(G_P,-)$ is a right adjoint of the functor\\ $S$Hom$_R(P,-)$.
\end{proof}

In the following Lemmas \ref{a3}-\ref{a5}, \ref{a6} and Corollary \ref{a50},  we assume that $P$ is a locally projective left $R$-module, $S$ is a subring of ${\rm End}_R(P)$ such that $P$ is a unitary right $S$-module, $S{\rm End}_R(P)=S$ and $Pf$ is a finitely generated left $R$-module for each $f^2=f \in S$.

\begin{Lem}\label{a3}
There is a natural transformation $\gamma: id_{R{\rm Mod}} \rightarrow R{\rm Hom}_S(G_P, S{\rm Hom}_R(P,-))$ such that for each unitary left $R$-module $M$, the morphism $$S{\rm Hom}_R(P,\gamma_M): S{\rm Hom}_R(P, M) \rightarrow S{\rm Hom}_R(P, R{\rm Hom}_S(G_P, S{\rm Hom}_R(P,M)))$$
is an $S$-module isomorphism which determines a natural isomorphism $$S{\rm Hom}_R(P, -) \simeq S{\rm Hom}_R(P, R{\rm Hom}_S(G_P, S{\rm Hom}_R(P,-))).$$
\end{Lem}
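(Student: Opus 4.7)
My plan is to take $\gamma$ to be the unit of the adjunction $S{\rm Hom}_R(P,-) \dashv R{\rm Hom}_S(G_P,-)$ furnished by Lemma~\ref{a2}, and then to show $S{\rm Hom}_R(P,\gamma_M)$ is an isomorphism by unwinding it through the explicit descriptions in Remarks~\ref{re2} and~\ref{re3}.

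For each $M\in R{\rm Mod}$ I would define $\gamma_M\colon M \to R{\rm Hom}_S(G_P, S{\rm Hom}_R(P,M))$ by the rule that $(m)\gamma_M$ sends $\beta\in G_P$ to the $R$-map $x\mapsto (x)\beta\cdot m$. Three straightforward checks are then required: given $f^2=f\in S$ with $f\beta=\beta$, the map $(\beta)((m)\gamma_M)$ lies in $S{\rm Hom}_R(P,M)$; the $S$-homomorphism $(m)\gamma_M$ is itself unitary and hence belongs to $R{\rm Hom}_S(G_P, S{\rm Hom}_R(P,M))$; and $\gamma_M$ is $R$-linear and natural in $M$. All of these are automatic once one observes that $\gamma_M$ is the image of $id_{S{\rm Hom}_R(P,M)}$ under the adjunction bijection built in Lemmas~\ref{a0} and~\ref{a2} (via the formula $\alpha \mapsto (m \mapsto (\beta \mapsto (\beta\otimes m)\alpha))$ applied to $\eta_M$).

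To show $S{\rm Hom}_R(P,\gamma_M)$ is an $S$-isomorphism, set $N = S{\rm Hom}_R(P,M)$ and compose it with three further isomorphisms,
\begin{displaymath}
S{\rm Hom}_R(P, R{\rm Hom}_S(G_P,N)) \xrightarrow{\psi_{P,N}} S{\rm Hom}_S(G_P \otimes_R P, N) \xrightarrow{{\rm Hom}_S(\eta_P^{-1},N)} S{\rm Hom}_S(S, N) \xrightarrow{\rho_N} N,
\end{displaymath}
where the first two are from Remarks~\ref{re2} and~\ref{re3}, the identification $S{\rm Hom}_R(P,P) = S$ uses the hypothesis $S{\rm End}_R(P)=S$, and $\rho_N(\sigma) = (u)\sigma$ for any $u\in S$ with $u\sigma=\sigma$ is the standard iso available for any unitary left $S$-module $N$ (well-defined because $N$ is unitary and $S$ has local units).

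An element chase then finishes the argument: $\phi\in S{\rm Hom}_R(P,M)$ goes successively to the composite $\phi\gamma_M\in S{\rm Hom}_R(P, R{\rm Hom}_S(G_P, N))$, to the $S$-map $\beta\otimes y\mapsto\bigl(x\mapsto (x)\beta\cdot(y)\phi\bigr)$, to the map $s\mapsto s\phi$ (using that $(x)s=\sum(x)\beta_i\, y_i$ whenever $(s)\eta_P^{-1}=\sum\beta_i\otimes y_i$, together with $R$-linearity of $\phi$), and finally to $u\phi=\phi$ for any $u\in S$ with $u\phi=\phi$. Hence the four-fold composite is $id_{S{\rm Hom}_R(P,M)}$, and since the three post-composed arrows are already isomorphisms, $S{\rm Hom}_R(P,\gamma_M)$ must be one as well; naturality of $\gamma$ upgrades this pointwise statement to the claimed functorial isomorphism. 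The main obstacle will be the bookkeeping in this chase: keeping the right-composition convention straight, distinguishing left from right $S$-actions on the various Homs and tensor products, and invoking $(s)\eta_P^{-1}$ at the appropriate moment so that $R$-linearity of $\phi$ collapses $\sum(x)\beta_i(y_i)\phi$ to $(x)s\phi$.
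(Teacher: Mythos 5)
Your proposal is correct and follows essentially the same route as the paper: take $\gamma$ to be the unit of the adjunction from Lemma \ref{a2}, then post-compose $S{\rm Hom}_R(P,\gamma_M)$ with the isomorphisms $\psi_{P,N}$, ${\rm Hom}_S(\eta_P^{-1},N)$ and the standard identification $S{\rm Hom}_S(S{\rm End}_R(P),N)\cong N$, and verify by an element chase that the composite is the identity (the paper phrases this as the composite equalling $\rho_N$, which is the same computation up to the direction of $\rho_N$). No substantive difference.
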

\begin{proof}
It follows from Lemmas \ref{a0}, \ref{a2} and \cite[Proposition 1.1]{am}.
\end{proof}

\begin{Lem}\label{b1}
For each $N \in S{\rm Mod}$, there exists an $S$-module isomorphism $$\nu_N:N \rightarrow S{\rm Hom}_R(P,R{\rm Hom}_S(G_P,N))$$
which determines a natural isomorphism ${\rm id}_{S{\rm Mod}} \rightarrow S{\rm Hom}_R(P,R{\rm Hom}_S(G_P,-))$.
\end{Lem}
\begin{proof}
It follows from Lemma \ref{a3} and \cite[Proposition 1.2 and Corollary 1.6]{am}.
\end{proof}

\begin{Lem}\label{a5}
Let $N$ be a unitary left $R$-module such that ${\rm st}_P(N) = 0$. Then
 \begin{itemize}
 \item[$(a)$] For each $X \in {\rm Ker}_P$, ${\rm Hom}_R(X,N)=0$.
 \item[$(b)$] The $R$-module homomorphism $\gamma_N: N \rightarrow R{\rm Hom}_S(G_P, S{\rm Hom}_R(P,N))$ is an isomorphism when $N$ is an injective module in $R{\rm Mod}$.
 \end{itemize}
\end{Lem}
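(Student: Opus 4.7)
My approach splits by part. For (a), I will reduce the claim ${\rm Hom}_R(X,N) = 0$ to showing $S{\rm Hom}_R(P,{\rm Im}(f)) = 0$ for any $f \in {\rm Hom}_R(X,N)$, because then ${\rm Im}(f) \subseteq {\rm st}_P(N) = 0$ forces $f = 0$. Given $\alpha \in S{\rm Hom}_R(P,{\rm Im}(f))$, I pick an idempotent $e \in S$ with $e\alpha = \alpha$. By Remark \ref{aa9}, $Pe$ is a finitely generated projective direct summand of $P$, so $\alpha|_{Pe}$ can be lifted through the surjection $X \twoheadrightarrow {\rm Im}(f)$ to $\beta : Pe \to X$. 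Composing with the canonical projection $\pi : P \to Pe$ gives a map $\pi\beta \in S{\rm Hom}_R(P,X) = 0$, and unraveling (using that the inclusion $\varepsilon : Pe \to P$ satisfies $\varepsilon\pi = {\rm id}_{Pe}$) forces $\beta = 0$ and hence $\alpha = 0$.

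For (b), I will treat injectivity and surjectivity of $\gamma_N$ separately. Injectivity is immediate from Lemma \ref{a3}: for any $\alpha \in S{\rm Hom}_R(P,\ker\gamma_N)$, composing with the inclusion $\ker\gamma_N \hookrightarrow N$ produces an element of $S{\rm Hom}_R(P,N)$ that is sent to zero by the isomorphism $S{\rm Hom}_R(P,\gamma_N)$, so $\alpha$ itself must vanish. This shows $\ker\gamma_N \in {\rm Ker}_P$, and hence $\ker\gamma_N \subseteq {\rm st}_P(N) = 0$.

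For surjectivity, I will use the injectivity of $N$ to split the monomorphism $\gamma_N : N \hookrightarrow R{\rm Hom}_S(G_P, S{\rm Hom}_R(P,N))$, decomposing the target as $\gamma_N(N) \oplus C'$ where $C'$ is a complement isomorphic to ${\rm coker}(\gamma_N)$. Applying the exact functor $S{\rm Hom}_R(P,-)$ of Lemma \ref{a4} to this splitting and invoking Lemma \ref{a3} again yields $S{\rm Hom}_R(P, C') = 0$, i.e.\ $C' \in {\rm Ker}_P$. The decisive step is then to invoke the adjunction of Lemma \ref{a2}:
\[
{\rm Hom}_R\bigl(C',\, R{\rm Hom}_S(G_P, S{\rm Hom}_R(P,N))\bigr) \,\cong\, {\rm Hom}_S\bigl(S{\rm Hom}_R(P, C'),\, S{\rm Hom}_R(P,N)\bigr) \,=\, 0.
\]
Since the inclusion $C' \hookrightarrow R{\rm Hom}_S(G_P, S{\rm Hom}_R(P,N))$ is an element of this Hom-group, it must be zero; hence $C' = 0$ and $\gamma_N$ is surjective.

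The main obstacle is precisely this final step. A natural-looking alternative is to try to prove that $\gamma_N(N)$ is essential in $R{\rm Hom}_S(G_P, S{\rm Hom}_R(P,N))$, which combined with $N$ being injective would give surjectivity. That approach handles any nonzero submodule $L$ with $S{\rm Hom}_R(P,L) \neq 0$ cleanly by way of Lemma \ref{a3}, but it breaks down on submodules lying in ${\rm Ker}_P$, where the intersection with $\gamma_N(N)$ can be zero. The adjunction formula above sidesteps the issue by directly ruling out nonzero $R$-maps from ${\rm Ker}_P$-objects into modules of the form $R{\rm Hom}_S(G_P, Y)$.
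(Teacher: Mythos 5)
Your proposal is correct and follows essentially the same route as the paper: part (a) via projectivity of $Pf$ and lifting through the image to land in $S{\rm Hom}_R(P,X)=0$, and part (b) by showing the kernel lies in ${\rm Ker}_P$, splitting off the cokernel using injectivity of $N$, killing it with the exact functor and Lemma \ref{a3}, and finishing with the adjunction of Lemma \ref{a2}. The only differences are cosmetic (a direct argument in place of the paper's contradiction in (a), and a complement in place of the cokernel in (b)).
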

\begin{proof}
$(a)$. Let $N$ be a unitary left $R$-module such that ${\rm st}_P(N) = 0$ and $X \in {\rm Ker}_P$.  On the contrary, assume that $g:X \rightarrow N$ is a non-zero $R$-module homomorphism. Without loss of generality we can assume $g$ is an epimorphism. Since ${\rm st}_P(N) = 0$, there exists a non-zero morphism $\alpha\in S{\rm Hom}_R(P,N)$.  Therefore there exists an idempotent $f \in S$ such that $f\alpha=\alpha$.  Since $Pf$ is a projective direct summand of $P$, there exists a non-zero $R$-module homomorphism $h:Pf \rightarrow X$ such that $hg=\epsilon \alpha$, where $\epsilon:Pf \rightarrow P$ is the canonical injection.  Let $\pi:P \rightarrow Pf$ be the canonical projection and $x \in P$. Since $(x)\pi h=(xf)h=(xf)\pi h=(x)f\pi h$, $\pi h \in S{\rm Hom}_R(P,X)$. Since $X \in {\rm Ker}_P$, $\pi h=0$. It follows that $h=0$ which is a contradiction. Therefore Hom$_R(X,N)=0$ for each $X \in {\rm Ker}_P$.\\
$(b)$. Let $N$ be an injective module in $R$Mod. Consider the exact sequence $$0 \longrightarrow K_1 \longrightarrow N \overset{\gamma_N}{\longrightarrow} R{\rm Hom}_S(G_P,S{\rm Hom}_R(P,N)) \longrightarrow K_2 \longrightarrow 0,$$ where $K_1$ is the kernel of $\gamma_N$ and $K_2$ is the cokernel of $\gamma_N$. By Lemma \ref{a4}, the functor\\ $S{\rm Hom}_R(P,-):R{\rm Mod} \rightarrow S{\rm Mod}$ is exact.  Because by Lemma \ref{a3} $S{\rm Hom}_R(P, \gamma_N)$ is an isomorphism, $S{\rm Hom}_R(P,K_1)=S{\rm Hom}_R(P,K_2)=0$. It follows that $K_1 , K_2 \in {\rm Ker}_P$. Since $K_1 \in {\rm Ker}_P$, by $(a)$, $K_1=0$. Since $N$ is an injective module in $R$Mod, the exact sequence $$0 \rightarrow N \rightarrow R{\rm Hom}_S(G_P,S{\rm Hom}_R(P,N)) \rightarrow K_2 \rightarrow 0$$ is split. Hence there exists an $R$-module monomorphism $K_2 \rightarrow R{\rm Hom}_S(G_P,S{\rm Hom}_R(P,N))$. Since by Lemma \ref{a2}, $${\rm Hom}_R(K_2,R{\rm Hom}_S(G_P,S{\rm Hom}_R(P,N)))\cong {\rm Hom}_S(S{\rm Hom}_R(P,K_2), S{\rm Hom}_R(P,N)),$$ $K_2=0$. Therefore $\gamma_N: N \rightarrow R{\rm Hom}_S(G_P, S{\rm Hom}_R(P,N))$ is an $R$-module isomorphism.
\end{proof}

\begin{Cor}\label{a50}
Let $N$ be an injective module in $R{\rm Mod}$ such that ${\rm st}_p(N)=0$. Then $S{\rm Hom}_R(P,N)$ is an injective module in $S{\rm Mod}$.
\end{Cor}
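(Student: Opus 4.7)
The plan is to translate injectivity of $S{\rm Hom}_R(P,N)$ in $S{\rm Mod}$ into injectivity of $N$ in $R{\rm Mod}$, via a natural chain of isomorphisms built from Lemmas \ref{a2}, \ref{b1} and \ref{a5}. Set $L := S{\rm Hom}_R(P,N)$.

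The first main step is to establish, for every $A \in S{\rm Mod}$, a natural isomorphism
\[
{\rm Hom}_S(A,\, L) \;\cong\; {\rm Hom}_R\bigl(R{\rm Hom}_S(G_P, A),\, N\bigr).
\]
To produce it, I would first use Lemma \ref{b1} to rewrite the $A$ on the left as $S{\rm Hom}_R(P, R{\rm Hom}_S(G_P, A))$, then apply the adjunction of Lemma \ref{a2} to push the $S{\rm Hom}_R(P, -)$ off to the second variable, obtaining
\[
{\rm Hom}_S(A,\, L) \;\cong\; {\rm Hom}_R\bigl(R{\rm Hom}_S(G_P, A),\, R{\rm Hom}_S(G_P, L)\bigr).
\]
Finally, Lemma \ref{a5}(b), whose hypotheses (injectivity of $N$ together with ${\rm st}_P(N)=0$) are exactly what we are given, provides the $R$-isomorphism $\gamma_N : N \cong R{\rm Hom}_S(G_P, L)$, which converts the right-hand side into ${\rm Hom}_R(R{\rm Hom}_S(G_P, A), N)$.

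For the second step, let $\iota : A \hookrightarrow B$ be a monomorphism in $S{\rm Mod}$. Since $R{\rm Hom}_S(G_P, -)$ is a right adjoint (Lemma \ref{a2}), it is left exact, so $R{\rm Hom}_S(G_P, \iota)$ is a monomorphism in $R{\rm Mod}$. Injectivity of $N$ then gives a surjection
\[
{\rm Hom}_R\bigl(R{\rm Hom}_S(G_P, B),\, N\bigr) \twoheadrightarrow {\rm Hom}_R\bigl(R{\rm Hom}_S(G_P, A),\, N\bigr),
\]
and transporting this surjection across the natural isomorphism of the previous step yields surjectivity of ${\rm Hom}_S(B, L) \to {\rm Hom}_S(A, L)$. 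This is exactly what is needed for $L$ to be injective in $S{\rm Mod}$.

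The only bookkeeping issue is naturality in $A$ of the displayed isomorphism, which I expect to be routine since both the adjunction of Lemma \ref{a2} and the isomorphism $\nu$ of Lemma \ref{b1} are themselves natural. The whole conceptual content is already packaged in Lemma \ref{a5}(b): injective unitary left $R$-modules with vanishing $P$-stable part are recoverable from their image under $S{\rm Hom}_R(P, -)$, and it is precisely this recoverability that lets us transfer injectivity back and forth across the functor.
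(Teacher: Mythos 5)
Your argument is correct and is essentially the expansion the paper intends: its proof of Corollary \ref{a50} is just the one-line citation of Lemmas \ref{a4}, \ref{b1} and \ref{a5}, and your reduction ${\rm Hom}_S(A,L)\cong {\rm Hom}_R(R{\rm Hom}_S(G_P,A),N)$ obtained by chaining the natural isomorphism of Lemma \ref{b1}, the adjunction, and the isomorphism $\gamma_N$ of Lemma \ref{a5}(b) is exactly the intended mechanism for transporting the injectivity test from $S{\rm Mod}$ back to the injective module $N$. The only cosmetic difference is that you invoke the adjunction of Lemma \ref{a2} (which in any case underlies both Lemma \ref{b1} and Lemma \ref{a5}) where the paper cites the exactness Lemma \ref{a4}; the substance of the argument is the same.
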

\begin{proof}
 It follows from Lemmas \ref{a4}, \ref{b1} and \ref{a5}.
\end{proof}

\begin{Lem}\label{a6}
Let $M$ be a unitary left $R$-module and $N$ be an injective module in $R{\rm Mod}$ such that ${\rm st}_P(N)=0$. Then the mapping ${\rm Hom}_R(M,N) \rightarrow {\rm Hom}_S(S{\rm Hom}_R(P,M),S{\rm Hom}_R(P,N))$ via $g \mapsto S{\rm Hom}_R(P,g)$ is an isomorphism of abelian groups.
\end{Lem}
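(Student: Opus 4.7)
The plan is to combine the adjunction of Lemma \ref{a2} with the invertibility of $\gamma_N$ provided by Lemma \ref{a5}(b) under our hypotheses on $N$. Since $S{\rm Hom}_R(P,-)$ is just the composition $g\mapsto S{\rm Hom}_R(P,g)$, it will be enough to factor this assignment through two explicit isomorphisms.

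First, by Lemma \ref{a2} the functor $R{\rm Hom}_S(G_P,-)$ is right adjoint to $S{\rm Hom}_R(P,-)$, so (setting $N' = S{\rm Hom}_R(P,N)$) there is a natural adjunction isomorphism of abelian groups
\[
\Phi : {\rm Hom}_S(S{\rm Hom}_R(P,M),\, S{\rm Hom}_R(P,N)) \xrightarrow{\cong} {\rm Hom}_R\bigl(M,\, R{\rm Hom}_S(G_P, S{\rm Hom}_R(P,N))\bigr).
\]
Unwinding the construction via Lemmas \ref{a0} and \ref{a2} (or equivalently, recognising $\gamma$ as the unit of the adjunction, consistent with Lemma \ref{a3}), one checks that $\Phi$ acts on morphisms by $\phi \longmapsto \gamma_M\, R{\rm Hom}_S(G_P,\phi)$.

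Next, since $N$ is injective in $R{\rm Mod}$ with ${\rm st}_P(N)=0$, Lemma \ref{a5}(b) shows that $\gamma_N : N \rightarrow R{\rm Hom}_S(G_P,\, S{\rm Hom}_R(P,N))$ is an $R$-module isomorphism. Post-composition with $\gamma_N$ therefore induces an isomorphism
\[
(\gamma_N)_{*} : {\rm Hom}_R(M, N) \xrightarrow{\cong} {\rm Hom}_R\bigl(M,\, R{\rm Hom}_S(G_P, S{\rm Hom}_R(P,N))\bigr),\qquad g \longmapsto g\,\gamma_N,
\]
whence $\Phi^{-1}\circ (\gamma_N)_{*}$ is an isomorphism ${\rm Hom}_R(M,N) \xrightarrow{\cong} {\rm Hom}_S(S{\rm Hom}_R(P,M),\, S{\rm Hom}_R(P,N))$.

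Finally, I would identify this composite with the map of the statement via the naturality of $\gamma$ (Lemma \ref{a3}): for every $g: M \to N$,
\[
g\,\gamma_N \;=\; \gamma_M\, R{\rm Hom}_S\bigl(G_P,\, S{\rm Hom}_R(P, g)\bigr) \;=\; \Phi\bigl(S{\rm Hom}_R(P,g)\bigr),
\]
so $\Phi^{-1}((\gamma_N)_{*}(g)) = S{\rm Hom}_R(P,g)$, as required. The only non-formal point in the argument is the explicit identification of the adjunction isomorphism $\Phi$ in terms of $\gamma_M$, which forces one to open up the construction of Lemma \ref{a2}; everything else is pure naturality.
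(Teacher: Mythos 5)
Your proposal is correct and follows essentially the same route as the paper: both factor the map as post-composition with the isomorphism $\gamma_N$ (Lemma \ref{a5}(b)) followed by the inverse of the adjunction isomorphism of Lemma \ref{a2}, which the paper writes out explicitly as the composite of $\psi_{M,S{\rm Hom}_R(P,N)}$ and ${\rm Hom}_S(\eta_M^{-1},-)$. The only difference is presentational: where you invoke naturality of the unit $\gamma$ to identify the composite with $g\mapsto S{\rm Hom}_R(P,g)$, the paper checks the same identity $\eta_M^{-1}(g\gamma_N)\psi_{M,S{\rm Hom}_R(P,N)}=S{\rm Hom}_R(P,g)$ by a direct element computation.
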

\begin{proof}
By Lemma \ref{a5}, ${\rm Hom}_R(M, \gamma_N): {\rm Hom}_R(M,N) \rightarrow {\rm Hom}_R(M, R{\rm Hom}_S(G_P,S{\rm Hom}_R(P,N)))$ is an isomorphism of abelian groups. Also by Lemma \ref{a0}, $$\psi_{M,S{\rm Hom}_R(P,N)}: {\rm Hom}_R(M,R{\rm Hom}_S(G_P,S{\rm Hom}_R(P,N))) \rightarrow {\rm Hom}_S(G_P\otimes_RM,S{\rm Hom}_R(P,N))$$ is an isomorphism of abelian groups. By the proof of Lemma \ref{a2}, the mapping $${\rm Hom}_S(\eta^{-1}_M,S{\rm Hom}_R(P,N)): {\rm Hom}_S(G_P\otimes_RM,S{\rm Hom}_R(P,N)) \rightarrow {\rm Hom}_S(S{\rm Hom}_R(P,M),S{\rm Hom}_R(P,N))$$ is an isomorphism of abelian groups. Therefore it is enough to show that for each $g \in {\rm Hom}_R(M,N)$, $$\eta^{-1}_M(g\gamma_N)\psi_{M,S{\rm Hom}_R(P,N)}=S{\rm Hom}_R(P,g).$$  Let $\alpha \in S{\rm Hom}_R(P,M)$, $g \in {\rm Hom}_R(M,N)$ and $x \in P$. Then there exists $\sum_{i=1}^n \beta_i \otimes m_i \in G_P\otimes_R M$ such that $\alpha=(\sum_{i=1}^n \beta_i \otimes m_i)\eta_M$. Hence
\begin{center}
$(\alpha)\eta^{-1}_M(g\gamma_N)\psi_{M,S{\rm Hom}_R(P,N)}=(\sum_{i=1}^n \beta_i \otimes m_i)(g\gamma_N)\psi_{M,S{\rm Hom}_R(P,N)}=\sum_{i=1}^n (\beta_i)((m_i)g)\gamma_N=\sum_{i=1}^n(\beta_i \otimes (m_i)g)\eta_N$.
\end{center}
Since $(x)\sum_{i=1}^n(\beta_i \otimes (m_i)g)\eta_N= \sum_{i=1}^n (x)\beta_i(m_i)g = (x)\alpha g$,  $S{\rm Hom}_R(P,g)=\eta^{-1}_M(g\gamma_N)\psi_{M,S{\rm Hom}_R(P,N)}$. Therefore ${\rm Hom}_R(M,N) \cong {\rm Hom}_S(S{\rm Hom}_R(P,M),S{\rm Hom}_R(P,N))$ as abelian groups.
\end{proof}

Let $\mathcal{U}$ be a non-empty set of unitary left $R$-modules. A unitary left $R$-module $M$ is said to be {\it cogenerated by $\mathcal{U}$} if for every pair of distinct morphisms $f,g:B \rightarrow M$ in $R$Mod, there exists a morphism $h: M \rightarrow U$ with $U \in \mathcal{U}$ and $fh \neq gh$.  $\mathcal{U}$ is called a {\it cogenerating set} for $R$Mod if every unitary left $R$-module cogenerated by $\mathcal{U}$. A unitary left $R$-module $U$ is called {\it cogenerator} in $R$Mod if the set $\mathcal{U}=\lbrace U \rbrace$ is a cogenerating set for $R$Mod (see \cite[Definition 14.1]{wi}).\\

Let $R$ and $S$ be two rings and $U$ be a unitary $R$-$S$-bimodule. Assume that $M$ is a unitary left $R$-module, $K$ be an $R$-submodule of $M$ and $X$ be a $S$-submodule of ${\rm Hom}_R(M,U)S$. Set $r_{{\rm Hom}_R(M,U)S}(K)= \lbrace g \in {\rm Hom}_R(M,U)S~|~ (x)g=0 {\rm ~for ~each~} x \in K \rbrace$ and
$l_M(X)= \lbrace m \in M~|~ (m)g=0 {\rm ~for ~each}~ g \in X \rbrace$ (see \cite[Definition 28.1]{wi}).

\begin{Lem}\label{4.1}
Let $R$ and $S$ be two rings, $U$ be a unitary $R$-$S$-bimodule and $M$ and $N$ be modules in $R{\rm Mod}$ and ${\rm Mod}S$, respectively.
\begin{itemize}
\item[$(a)$] If $\lbrace Uf~|~f^2=f \in S \rbrace$ is a cogenerating set for $R{\rm Mod}$, then $l_M(r_{{\rm Hom}_R(M,U)S}(K))=K$ for each $R$-submodule $K$ of $M$.
\item[$(b)$] If $\lbrace eU~|~e^2=e \in R \rbrace$ is a cogenerating set for ${\rm Mod}S$, then $l_N(r_{R{\rm Hom}_S(N,U)}(K'))=K'$ for each $S$-submodule $K'$ of $N$.
\end{itemize}
\end{Lem}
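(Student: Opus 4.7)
\medskip

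\noindent\textbf{Proof plan.} The inclusion $K \subseteq l_M(r_{{\rm Hom}_R(M,U)S}(K))$ is immediate: if $x \in K$ and $g \in r_{{\rm Hom}_R(M,U)S}(K)$, then $(x)g = 0$ by the definition of the right annihilator. So the real content of part $(a)$ is the reverse inclusion, which I will prove by contrapositive. Given $x \in M \setminus K$, I will build $g \in {\rm Hom}_R(M,U)S$ that vanishes on $K$ but has $(x)g \neq 0$.

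To do this I pass to $M/K$, where $x+K$ is a nonzero element. I will first note the following standard reformulation of the cogenerating property: $\mathcal{U} = \lbrace Uf \mid f^2 = f \in S\rbrace$ cogenerates $R{\rm Mod}$ if and only if for every unitary left $R$-module $L$ and every nonzero $y \in L$, there exists a morphism $\eta \colon L \to Uf$ (for some idempotent $f \in S$) with $(y)\eta \neq 0$. The nontrivial direction uses the original definition applied to the pair of distinct morphisms $(0,\iota) \colon Re \to L$, where $e$ is an idempotent of $R$ with $ey = y$ and $(re)\iota = ry$; distinguishing these by some $h \colon L \to Uf$ produces an $h$ with $(re)\iota h \neq 0$ for some $r$, hence with $(y)h \neq 0$ (since homomorphisms are written on the right). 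Applied to $L = M/K$ and $y = x+K \neq 0$, this yields an idempotent $f \in S$ and $h \colon M/K \to Uf$ with $(x+K)h \neq 0$.

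Now let $\pi \colon M \to M/K$ be the canonical projection and $\varepsilon \colon Uf \hookrightarrow U$ the inclusion, and set $g := \pi h \varepsilon \in {\rm Hom}_R(M,U)$. To verify $g \in {\rm Hom}_R(M,U)S$, observe that $(m)g \in Uf$ for every $m \in M$, so $(m)g \cdot f = (m)g$, which says exactly that $g \cdot f = g$ in the right $S$-module structure on ${\rm Hom}_R(M,U)$; hence $g \in {\rm Hom}_R(M,U)S$. Since $(y)\pi = 0$ for every $y \in K$, we have $(y)g = 0$ for all $y \in K$, so $g \in r_{{\rm Hom}_R(M,U)S}(K)$. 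On the other hand $(x)g = (x+K)h\varepsilon$ is nonzero because $\varepsilon$ is injective and $(x+K)h \neq 0$. Therefore $x \notin l_M(r_{{\rm Hom}_R(M,U)S}(K))$, proving the reverse inclusion and hence part $(a)$.

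Part $(b)$ is entirely symmetric: swap left/right, replace the projection $\pi$ and inclusion $\varepsilon$ by their counterparts $N \to N/K'$ and $eU \hookrightarrow U$, and use the cogenerating set $\lbrace eU \mid e^2 = e \in R\rbrace$ for ${\rm Mod}S$ in place of $\lbrace Uf \mid f^2 = f \in S\rbrace$; the same bookkeeping shows that the resulting map lies in $R{\rm Hom}_S(N,U)$ (left $R$-action satisfies $e \cdot \alpha = \alpha$), vanishes on $K'$, and is nonzero at the chosen element of $N \setminus K'$. There is no real obstacle; the only point requiring care is correctly identifying the one-sided unital structure on the Hom-sets so that the constructed homomorphism genuinely lies in ${\rm Hom}_R(M,U)S$ (resp.\ $R{\rm Hom}_S(N,U)$), which is handled cleanly by factoring through $Uf$ (resp.\ $eU$).
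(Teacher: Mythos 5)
Your proof is correct and follows essentially the same route as the paper: both establish the trivial inclusion $K \subseteq l_M(r_{{\rm Hom}_R(M,U)S}(K))$, then for $x \notin K$ use the cogenerating set applied to the nonzero map $Rx \to M/K$ to produce $h\colon M/K \to Uf$ with $(x+K)h \neq 0$, and compose with $M \to M/K$ and $Uf \hookrightarrow U$ to get an element of $r_{{\rm Hom}_R(M,U)S}(K)$ not killing $x$, checking membership in ${\rm Hom}_R(M,U)S$ via $gf = g$. No issues.
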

\begin{proof}
$(a)$. Let $K$ be an $R$-submodule of $M$ and $\lbrace Uf~|~f^2=f \in S \rbrace$ be a cogenerating set for $R{\rm Mod}$. Then $l_M(r_{{\rm Hom}_R(M,U)S}(K))= \lbrace m \in M~|~(m)g=0 {\rm ~for~ each}~ g \in r_{{\rm Hom}_R(M,U)S}(K) \rbrace= \bigcap_{g \in r_{{\rm Hom}_R(M,U)S}(K)}{\rm Ker}g \supseteq K$. Assume $x \in \bigcap_{g \in r_{{\rm Hom}_R(M,U)S}(K)}{\rm Ker}g $ that $x \notin K$. There exists an idempotent $e \in R$ such that $ex=x$ and hence $Rx \neq 0$.  The mapping $\ell: Rx \rightarrow M/K$ via $rx \mapsto rx+K$ is a non-zero $R$-module homomorphism. Since $\lbrace Uf~|~f^2=f \in S \rbrace$ is a cogenerating set for $R{\rm Mod}$, there exists an $R$-module homomorphism $h: M/K \rightarrow Uf$ such that $\ell h \neq 0$, where $f$ is an idempotent of $S$. Then there exists $r \in R$ such that $(rx + K)h \neq 0$ and hence $(x + K)h \neq 0$. Let $\varepsilon: Uf \rightarrow U$ be the canonical injection. Define the mapping $\overline{h}: M \rightarrow U$ via $y \mapsto (y+K)h\varepsilon$. It is not difficult to see that $\overline{h}$ is an $R$-module homomorphism,  $(x)\overline{h} \neq 0$, $\overline{h}f=\overline{h}$ and $K \subseteq {\rm Ker}\overline{h}$. Hence $\overline{h} \in r_{{\rm Hom}_R(M,U)S}(K)$ that $(x)\overline{h} \neq 0$, which is a contradiction. Therefore $x \in K$ and hence $l_M(r_{{\rm Hom}_R(M,U)S}(K))=K$.\\
$(b)$. It is similar to the proof of the part $(a)$.
\end{proof}

A ring $R$ is called {\it left {\rm (resp.,} right{\rm )} locally noetherian} if every finitely generated unitary left (resp., right) $R$-module is noetherian (see \cite{wis}).

\begin{Lem}\label{4.2}
Let $R$ be a ring and $U$ and $M$ be unitary left $R$-modules. Assume that $S$ is a subring of ${\rm End}_R(U)$ such that it is a right locally noetherian ring, $S{\rm End}_R(U)=S$, $U$ is a unitary right $S$-module and $Uf$ is an injective module in $R{\rm Mod}$ for each idempotent $f \in S$. If ${\rm Hom}_R(M,U)S$ is a finitely generated right $S$-module, then $r_{{\rm Hom}_R(M,U)S}(l_M(X)) = X$ for each $S$-submodule $X$ of ${\rm Hom}_R(M,U)S$.
\end{Lem}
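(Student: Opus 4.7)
The inclusion $X \subseteq r_{{\rm Hom}_R(M,U)S}(l_M(X))$ is immediate from the definitions, so the work is in the reverse containment. Since $S{\rm End}_R(U)=S$ and $Uf$ is injective in $R{\rm Mod}$ for every idempotent $f\in S$, one key preliminary point is that $S$ is right locally noetherian and ${\rm Hom}_R(M,U)S$ is finitely generated over $S$, hence every $S$-submodule of ${\rm Hom}_R(M,U)S$ is itself finitely generated. In particular, I can write $X=\sum_{i=1}^{n}g_iS$ for some $g_1,\dots,g_n\in X$, and pick idempotents $f_i\in S$ with $g_if_i=g_i$ (so ${\rm Im}\,g_i\subseteq Uf_i$).

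The main construction is to package the chosen generators into a single map. Define $\phi:M\rightarrow\bigoplus_{i=1}^{n}Uf_i$ by $(m)\phi=((m)g_1,\dots,(m)g_n)$. I would first verify that $\ker\phi=l_M(X)$: the inclusion $\ker\phi\supseteq l_M(X)$ is obvious since each $g_i\in X$, while the reverse uses that every $h\in X$ can be written as $\sum g_is_i$, so $(m)\phi=0$ forces $(m)h=0$. Now take an arbitrary $g\in r_{{\rm Hom}_R(M,U)S}(l_M(X))$ and choose an idempotent $f\in S$ with $gf=g$; this factors $g$ through the inclusion $\iota:Uf\hookrightarrow U$, giving $g':M\rightarrow Uf$. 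Because $g$ vanishes on $\ker\phi=l_M(X)$, we obtain an induced $R$-module map $\tilde g':\phi(M)\rightarrow Uf$.

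Here is where I use the hypothesis that $Uf$ is injective in $R{\rm Mod}$: extend $\tilde g'$ to $\hat g':\bigoplus_{i=1}^{n}Uf_i\rightarrow Uf$, and decompose via the canonical injections as $\hat g'=(\hat g'_1,\dots,\hat g'_n)$ with $\hat g'_i:Uf_i\rightarrow Uf$. Let $\pi_i:U\rightarrow Uf_i$ denote $x\mapsto xf_i$; then setting $s_i:=\pi_i\hat g'_i\iota\in{\rm End}_R(U)$ and checking the identity $f_is_i=s_i$ (using $\iota\pi_i={\rm id}_{Uf_i}$ on the left and inspecting images) places $s_i\in S{\rm End}_R(U)=S$. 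A direct computation then shows
\[(m)\Bigl(\sum_{i=1}^{n}g_is_i\Bigr)=\sum_{i=1}^{n}\bigl(((m)g_i)\pi_i\bigr)\hat g'_i\,\iota=\bigl((m)\phi\bigr)\hat g'\,\iota=\bigl((m)\tilde g'\bigr)\iota=(m)g,\]
where the middle equality uses the decomposition of $\hat g'$ and the last uses $g=g'\iota$ together with the restriction $\hat g'|_{\phi(M)}=\tilde g'$. Consequently $g=\sum_ig_is_i\in X$, finishing the proof.

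The only delicate point is verifying that the endomorphism $s_i$ one builds genuinely lies in the subring $S$, i.e.\ satisfies the local-unit condition $f_is_i=s_i$; everything else is bookkeeping with the injectivity of $Uf$ and the right noetherian consequence that $X$ is finitely generated.
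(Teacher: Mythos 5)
Your proposal is correct and follows essentially the same route as the paper: reduce to finitely many generators $g_1,\dots,g_n$ of $X$ via right local noetherianity, embed $M/l_M(X)$ into a finite direct sum of injective modules of the form $Uf_i$, extend the induced map by injectivity, and read off coefficients $s_i$ satisfying the local-unit condition so that $g=\sum_i g_is_i\in X$. The only cosmetic difference is that the paper chooses a single idempotent $f$ dominating all the $g_i$ and $g$ and works with $Uf\oplus\cdots\oplus Uf$, whereas you keep separate idempotents $f_i$; both are equally valid.
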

\begin{proof}
We show that $r_{{\rm Hom}_R(M,U)S}(l_M(X)) \subseteq X$. Since ${\rm Hom}_R(M,U)S$ is a finitely generated right $S$-module and $S$ is right locally noetherian, $X$ is a finitely generated right $S$-module. Let $X=<g_1,\ldots,g_k>$, where $k \in {\Bbb{N}}$ and each $g_i \in X$.  Then $l_M(X)=\lbrace m \in M~|~(m)g=0~{\rm for~each~}g \in X \rbrace= \bigcap_{g \in X}{\rm Ker}g=\bigcap_{i=1}^k{\rm Ker}g_i$.  Let $g \in r_{{\rm Hom}_R(M,U)S}(l_M(X))$. Then $g \in {\rm Hom}_R(M,U)S$ and $\bigcap_{i=1}^k{\rm Ker}g_i \subseteq {\rm Ker}g$.  There exists an idempotent $f\in S$ such that $g_if=g_i$ and $gf=g$.  We define the mapping $\overline{g}: M/\bigcap_{i=1}^k{\rm Ker}g_i \rightarrow Uf$ via $x+ \bigcap_{i=1}^k{\rm Ker}g_i \mapsto (x)g$. It is easy to see that $\overline{g}$ is an $R$-module homomorphism. We also define the mapping $\overline{g_i}: M \rightarrow Uf$ via $x \mapsto (x)g_i$.  It is not difficult to see that each $\overline{g_i}$ is an $R$-module homomorphism. Consider the following diagram
\begin{displaymath}
\xymatrix{
0 \ar[r] & \dfrac{M}{\bigcap_{i=1}^k{\rm Ker}g_i}  \ar[d]_{\overline{g}} \ar[r]^<<<<{(\overline{g_1},\ldots, \overline{g_k})}&
Uf \oplus \ldots \oplus Uf\\
& Uf, & }
\end{displaymath}
where its row is exact. Since $Uf$ is an injective module in $R$Mod, there exists an $R$-module homomorphism $h: Uf \oplus \ldots \oplus Uf \rightarrow Uf$ such that the following diagram is commutative
\begin{displaymath}
\xymatrix{
0 \ar[r] & \dfrac{M}{\bigcap_{i=1}^k{\rm Ker}g_i}  \ar[d]_{\overline{g}} \ar[r]^<<<<{(\overline{g_1},\ldots, \overline{g_k})}&
Uf \oplus \ldots \oplus Uf \ar[dl]^{h}\\
& Uf. & }
\end{displaymath}
Let $x \in M$. Then $$(x)g=(((x)g_1,\ldots,(x)g_k))h\ell=(\sum_{i=1}^k((x)g_i)\varepsilon_i)h\ell=\sum_{i=1}^k((x)g_i)\varepsilon_ih\ell=\sum_{i=1}^k((x)g_if)\varepsilon_ih\ell,$$
where each $\varepsilon_i: Uf \rightarrow Uf \oplus \ldots \oplus Uf$ is the canonical injection and $\ell: Uf \rightarrow U$ is the canonical injection. For each $1 \leq i \leq k$, set $s_i=\pi\varepsilon_ih\ell$, where $\pi: U \rightarrow Uf$ is the canonical projection. It is not difficult to see that for each $i$, $fs_i=s_i$ and so each $s_i \in S$. Since each $1 \leq i\leq k$, $(x)g_is_i=((x)g_i)s_i=((x)g_if)\varepsilon_ih\ell$, $(x)g=\sum_{i=1}^k(x)g_is_i$. It follows that $g=\sum_{i=1}^kg_is_i$ and hence $g \in X$. Therefore $r_{{\rm Hom}_R(M,U)S}(l_M(X)) = X$.
\end{proof}

\section{A Generalization of Morita's theorem for rings with local units}

In this section, we extend the Auslander's results on the theory of Morita equivalence to rings with local units. We give a reformulation of the Morita theorem for rings with local units (see Theorem \ref{a8}). Moreover we give a version of the Morita theorem over rings with local units in terms of their  full subcategory of finitely generated projective unitary modules (see Theorem \ref{d}).

\begin{Lem}\label{a7}
Let $R$ and $S$ be two rings, $P$ be an $R$-$S$-bimodule and $M$ be a unitary left $R$-module. Then
\begin{itemize}
\item[$(a)$] There exists an $R$-module epimorphism $\bigoplus_{\alpha \in S{\rm Hom}_R(P,M)} Pe_{\alpha} \rightarrow {\rm sTr}(P,M)$, where $e_{\alpha}$ is an idempotent of $S$ that $e_{\alpha}\alpha=\alpha$ for each $\alpha \in S{\rm Hom}_R(P,M)$.
\item[$(b)$] $S{\rm Hom}_R(P, M) \cong S{\rm Hom}_R(P, {\rm sTr}(P,M))$ as $S$-modules when $P$ is a locally projective left $R$-module such that $Pf$ is a finitely generated left $R$-module for each $f^2=f \in S$.
\end{itemize}
\end{Lem}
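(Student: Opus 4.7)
The plan is as follows.

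\emph{Part (a).} I would exploit the unitary left $S$-module structure on $S{\rm Hom}_R(P,M)$. For each $\alpha\in S{\rm Hom}_R(P,M)$, pick an idempotent $e_\alpha\in S$ with $e_\alpha\alpha=\alpha$. Since homomorphisms of left modules are written on the right, $(x)(e_\alpha\alpha)=(xe_\alpha)\alpha$ for every $x\in P$, so the equality $e_\alpha\alpha=\alpha$ yields $(xe_\alpha)\alpha=(x)\alpha$. Hence the restriction $\alpha|_{Pe_\alpha}:Pe_\alpha\rightarrow M$ already has image equal to ${\rm Im}\,\alpha$, giving a surjection $Pe_\alpha\rightarrow {\rm Im}\,\alpha$. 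Taking the direct sum of these restrictions and composing with the canonical surjection $\bigoplus_\alpha {\rm Im}\,\alpha\rightarrow \sum_\alpha {\rm Im}\,\alpha={\rm sTr}(P,M)$ produces the desired $R$-module epimorphism
$$\bigoplus_{\alpha\in S{\rm Hom}_R(P,M)}Pe_\alpha\;\longrightarrow\;{\rm sTr}(P,M).$$

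\emph{Part (b).} The canonical inclusion $i:{\rm sTr}(P,M)\hookrightarrow M$ induces an $S$-module homomorphism $S{\rm Hom}_R(P,i):S{\rm Hom}_R(P,{\rm sTr}(P,M))\rightarrow S{\rm Hom}_R(P,M)$, $\gamma\mapsto\gamma i$, and I would prove directly that this map is an isomorphism. Injectivity is immediate because $i$ is a monomorphism, so $\gamma i=0$ forces $\gamma=0$ (one may alternatively invoke Lemma~\ref{a4}, which under the present hypotheses makes $S{\rm Hom}_R(P,-)$ exact). For surjectivity, given $\alpha\in S{\rm Hom}_R(P,M)$, the very definition ${\rm sTr}(P,M)=\sum_{g\in S{\rm Hom}_R(P,M)}{\rm Im}\,g$ guarantees that ${\rm Im}\,\alpha\subseteq {\rm sTr}(P,M)$, so the rule $(x)\alpha':=(x)\alpha$ defines an $R$-linear map $\alpha':P\rightarrow {\rm sTr}(P,M)$ with $\alpha' i=\alpha$.

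The only delicate point is verifying that the corestriction $\alpha'$ actually lies in the \emph{unitary} submodule $S{\rm Hom}_R(P,{\rm sTr}(P,M))$, not merely in ${\rm Hom}_R(P,{\rm sTr}(P,M))$. For this I would transport an idempotent witness from $\alpha$: picking $e_\alpha\in S$ with $e_\alpha\alpha=\alpha$, the computation $(x)(e_\alpha\alpha')=(xe_\alpha)\alpha'=(xe_\alpha)\alpha=(x)\alpha=(x)\alpha'$ gives $e_\alpha\alpha'=\alpha'$, so $\alpha'\in S{\rm Hom}_R(P,{\rm sTr}(P,M))$ and $\alpha=S{\rm Hom}_R(P,i)(\alpha')$. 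This short unitary check is the main obstacle; the rest of the argument is formal. The locally projective hypothesis and the finite generation of each $Pf$ are not strictly required for this direct argument, but they permit an alternative route: apply the exact functor $S{\rm Hom}_R(P,-)$ (Lemma~\ref{a4}) to the short exact sequence $0\rightarrow {\rm sTr}(P,M)\rightarrow M\rightarrow M/{\rm sTr}(P,M)\rightarrow 0$ and argue, using part~(a) together with Remark~\ref{aa9} to realise each $Pe_\alpha$ as a finitely generated projective direct summand of $P$, that $S{\rm Hom}_R(P,M/{\rm sTr}(P,M))=0$; this again forces $S{\rm Hom}_R(P,i)$ to be an isomorphism.
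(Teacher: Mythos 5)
Your proof is correct. Part (a) is essentially the paper's argument: both of you use the idempotent witness $e_\alpha$ to see that $(xe_\alpha)\alpha=(x)\alpha$, so that the restrictions to the $Pe_\alpha$ already cover ${\rm sTr}(P,M)$. Part (b), however, takes a genuinely different and more elementary route. The paper reduces (b) to the vanishing $S{\rm Hom}_R(P,M/{\rm sTr}(P,M))=0$, which it proves by lifting a hypothetical nonzero morphism through the projection $M\rightarrow M/{\rm sTr}(P,M)$ using the fact (Remark \ref{aa9}) that $Pf$ is a finitely generated projective direct summand of $P$; this is exactly where local projectivity and the finite generation of each $Pf$ enter, and left exactness of $S{\rm Hom}_R(P,-)$ then yields the isomorphism. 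Your corestriction argument bypasses all of this: since each $\alpha\in S{\rm Hom}_R(P,M)$ is itself one of the summands defining ${\rm sTr}(P,M)$, its image automatically lands in ${\rm sTr}(P,M)$, and your computation $(x)(e_\alpha\alpha')=(xe_\alpha)\alpha=(x)\alpha=(x)\alpha'$ correctly places the corestriction $\alpha'$ in the unitary submodule, so the inclusion-induced map is surjective (injectivity being trivial). This shows that (b) holds for an arbitrary unitary $R$-$S$-bimodule $P$ --- the stated hypotheses are not needed for the isomorphism itself, only for the strictly stronger vanishing statement $S{\rm Hom}_R(P,M/{\rm sTr}(P,M))=0$ that the paper proves en route (and which is genuinely used elsewhere, e.g.\ in the proof of Theorem \ref{a8}). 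The one small caveat is your closing sketch of the ``alternative route'': that vanishing is not a consequence of part (a), but of the projectivity of $Pf$, which lets one lift $\epsilon\alpha$ against the quotient map as the paper does; since this is only a side remark in your write-up, it does not affect the validity of your main argument.
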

\begin{proof}
$(a)$. It is clear.\\
$(b)$. Let $P$ be a locally projective left $R$-module and assume that $Pf$ is a finitely generated left $R$-module for each $f^2=f \in S$. We show that $S{\rm Hom}_R(P, M) \cong S{\rm Hom}_R(P, {\rm sTr}(P,M))$ as $S$-modules. By Lemma \ref{a4}, it is enough to show that $S{\rm Hom}_R(P, M/{\rm sTr}(P,M)) = 0$. Suppose, contrary to our claim, that $S{\rm Hom}_R(P, M/{\rm sTr}(P,M)) \neq 0$. Let $0 \neq \alpha \in S{\rm Hom}_R(P, M/{\rm sTr}(P,M))$.  Since $\alpha \in S{\rm Hom}_R(P, M/{\rm sTr}(P,M))$,  there exists an idempotent $f \in S$ such that $f\alpha=\alpha$. Suppose that  $\epsilon:Pf \rightarrow P$ is the canonical injection and $\pi':M \rightarrow M/{\rm sTr}(P,M)$ is the canonical projection. $\alpha \neq 0$ yields $h\pi' \neq \epsilon \alpha$ for each $R$-module homomorphism $h: Pf \rightarrow M$. Indeed, if $h: Pf \rightarrow M$ is an $R$-module homomorphism such that  $h\pi' = \epsilon \alpha$, then $\pi h\pi'= \pi \epsilon \alpha$, where  $\pi:P \rightarrow Pf$ is a canonical projection. Since for each $x \in P$, $(x)\pi\epsilon \alpha= (xf)\epsilon \alpha=(xf)\alpha=(x)f\alpha=(x)\alpha$ we observe that $\pi h\pi'= \alpha$. Because $(x)\pi h=(xf)h=(xf)\pi h=(x)f\pi h$, $\pi h \in S{\rm Hom}_R(P,M)$. Consequently there is a $\beta \in S{\rm Hom}_R(P,M)$ such that $\beta \pi'=\alpha$ which is impossible. So $h\pi' \neq \epsilon \alpha$ for each $R$-module homomorphism $h: Pf \rightarrow M$. It follows that $Pf$ is not a projective left $R$-module. We conclude that either $P$ is not a locally projective left $R$-module or $Pf$ is not a finitely generated left $R$-module which is a contradiction. Therefore $S{\rm Hom}_R(P, M/{\rm sTr}(P,M)) = 0$.
\end{proof}

The following Proposition is closely related to the perfect localization induced by finitely generated projective module discussed detailedly in Section 8 of Chapter XI of \cite{bs} (see also \cite[Proposition 6.5]{ausla1}). We show this result for rings with local units. In fact we give a reformulation of the  Auslander result for rings with local units.

\begin{Pro}\label{a8}
Let $R$ be a ring and $P$ be a locally projective left $R$-module. Assume that $S$ is a subring of ${\rm End}_R(P)$ such that $P\in {\rm Mod}S$, $S{\rm End}_R(P)=S$ and $Pf$ is a finitely generated left $R$-module for each $f^2=f \in S$. Then the functor $S{\rm Hom}_R(P,-):\mathscr{C}_P \rightarrow S{\rm Mod}$ is an equivalence of categories.
\end{Pro}
\begin{proof}
We are going to show that the functor $S{\rm Hom}_R(P,-):\mathscr{C}_P \rightarrow S{\rm Mod}$ is full, faithful and dense. First we show that the functor $S{\rm Hom}_R(P,-)$ is dense.  Let $N$ be a unitary left $S$-module. From \cite[Proposition 1.2 and Corollary 1.6]{am} we get $S{\rm Hom}_R(P,P\otimes_SN)\cong S{\rm End}_R(P)S \otimes_SN = S\otimes_SN \cong N$ as $S$-modules.  Set $M:=P\otimes_SN$. Consider the exact sequence
$0 \rightarrow {\rm st}_P(M) \rightarrow M \rightarrow M/{{\rm st}_P(M)}\rightarrow 0$. We deduce from Lemma \ref{a4} that the sequence $$0 \rightarrow S{\rm Hom}_R(P,{\rm st}_P(M)) \rightarrow S{\rm Hom}_R(P,M) \rightarrow S{\rm Hom}_R(P,M/{\rm st}_P(M)) \rightarrow 0$$ is exact. From $S{\rm Hom}_R(P,{\rm st}_P(M))=0$ we get $S{\rm Hom}_R(P,M) \cong S{\rm Hom}_R(P,M/{\rm st}_P(M))$ as $S$-modules. Hence $N \cong S{\rm Hom}_R(P,{\rm sTr}(P,M/{\rm st}_P(M)))$ as $S$-modules by using of Lemma \ref{a7}(b). It is enough to show that ${\rm sTr}(P,M/{\rm st}_P(M)) \in \mathscr{C}_P$. Since for each $g \in S{\rm Hom}_R(P,M/{\rm st}_P(M))$, ${\rm Im}g \subseteq {\rm sTr}(P,M/{\rm st}_P(M))$, so  ${\rm sTr}(P,{\rm sTr}(P,M/{\rm st}_P(M)))= {\rm sTr}(P,M/{\rm st}_P(M))$. It is sufficient to show that ${\rm st}_P({\rm sTr}(P,M/{\rm st}_P(M)))=0$. Let $K$ be a non-zero $R$-submodule of $M/{\rm st}_P(M)$. Then $K=Y/{\rm st}_P(M)$, where $Y$ is a submodule of $M$ and ${\rm st}_P(M) \subsetneq Y$. Since ${\rm st}_P(M) \subsetneq Y$, $S{\rm Hom}_R(P,Y)\neq 0$. So there exists $0 \neq \alpha \in S{\rm Hom}_R(P,Y)$. Let $\pi: Y \rightarrow Y/{\rm st}_P(M)$ be the canonical projection. It is easy to see that $0 \neq \alpha \pi \in S{\rm Hom}_R(P,K)$. This yields ${\rm st}_P({\rm sTr}(P,M/{\rm st}_P(M)))=0$.  Therefore the functor $S{\rm Hom}_R(P,-): \mathscr{C}_P \rightarrow S{\rm Mod}$ is dense. Let $0 \neq h:M \rightarrow N$ in $\mathscr{C}_P$. Since $M = {\rm sTr}(P,M)$, there exists a non-zero morphism $g \in S{\rm Hom}_R(P,M)$ such that ${\rm Im}g \nsubseteq {\rm Ker}h$. Thus the functor $S{\rm Hom}_R(P,-)$ is faithful. Now we show that the functor $S{\rm Hom}_R(P,-)$ is full. Let $\varphi:S{\rm Hom}_R(P,M) \rightarrow S{\rm Hom}_R(P,N)$ be an $S$-module homomorphism and $M, N \in \mathscr{C}_P$. Since $M \in \mathscr{C}_P$, ${\rm sTr}(P,M)=M$ and ${\rm st}_P(M)=0$. Since ${\rm sTr}(P,M)=M$ by Lemma \ref{a7}(a), we can see that there is an $R$-module epimorphism $g:\bigoplus_{\alpha \in S{\rm Hom}_R(P,M)} Pe_{\alpha} \rightarrow M$, where $\lbrace e_{\alpha}~|~ \alpha \in S{\rm Hom}_R(P,M)\rbrace$ is a set of idempotents of $S$ with $e_{\alpha}\alpha=\alpha$. Also ${\rm st}_P(M)=0$ yields ${\rm st}_P(E(M))=0$. By applying Corollary \ref{a50}, we conclude that $S{\rm Hom}_R(P,E(M))$ is an injective module in $S$Mod. By the similar argument there is an $R$-module epimorphism $h:\bigoplus_{\beta \in S{\rm Hom}_R(P,N)} Pe_{\beta} \rightarrow N$, where $\lbrace e_{\beta}~|~ \beta \in S{\rm Hom}_R(P,N)\rbrace$ is a set of idempotents of $S$ with $e_{\beta}\beta=\beta$. Moreover ${\rm st}_P(E(N))=0$ and $S{\rm Hom}_R(P,E(N))$ is an injective module in $S$Mod. Let $\varepsilon: M \rightarrow E(M)$ and $\ell: N \rightarrow E(N)$ be the canonical injections. Since $S{\rm Hom}_R(P,E(N))$ is injective in $S$Mod, there is an $S$-module homomorphism $\varphi'':S{\rm Hom}_R(P,E(M)) \rightarrow S{\rm Hom}_R(P,E(N))$ such that the following diagram is commutative
 \begin{displaymath}
\xymatrixcolsep{5pc}\xymatrix{
0 \ar[r] & S{\rm Hom}_R(P,M) \ar[r]^{S{\rm Hom}_R(P,\varepsilon)} \ar[d]_{\varphi} &
S{\rm Hom}_R(P,E(M)) \ar[d]^{\varphi''} \\
0 \ar[r] & S{\rm Hom}_R(P,N)   \ar[r]^{S{\rm Hom}_R(P,\ell)} & S{\rm Hom}_R(P,E(N)).}
\end{displaymath}
 By Lemma \ref{a6}, there exists an $R$-module homomorphism $v:E(M) \rightarrow E(N)$ such that $\varphi''=S{\rm Hom}_R(P,v)$ because ${\rm st}_P(E(N))=0$. Consider the following diagram
\begin{displaymath}
\xymatrixcolsep{5pc}\xymatrix{
S{\rm Hom}_R(P,\bigoplus_{\alpha \in S{\rm Hom}_R(P,M)} Pe_{\alpha}) \ar[r]^<<<<<<<{S{\rm Hom}_R(P,g)}  &
S{\rm Hom}_R(P,M) \ar[d]^{\varphi} \\
S{\rm Hom}_R(P,\bigoplus_{\beta \in S{\rm Hom}_R(P,N)} Pe_{\beta})   \ar[r]_<<<<<<<{S{\rm Hom}_R(P,h)} & S{\rm Hom}_R(P,N).}
\end{displaymath}
Since $S{\rm Hom}_R(P,\bigoplus_{\alpha \in S{\rm Hom}_R(P,M)} Pe_{\alpha})$ is a projective unitary left $S$-modules, by Lemma \ref{a4},  there is an $S$-module homomorphism $$\varphi':S{\rm Hom}_R(P,\bigoplus_{\alpha \in S{\rm Hom}_R(P,M)} Pe_{\alpha}) \rightarrow S{\rm Hom}_R(P,\bigoplus_{\beta \in S{\rm Hom}_R(P,N)} Pe_{\beta}),$$ such that the following diagram is commutative
\begin{displaymath}
\xymatrixcolsep{5pc}\xymatrix{
S{\rm Hom}_R(P,\bigoplus_{\alpha \in S{\rm Hom}_R(P,M)} Pe_{\alpha}) \ar[r]^<<<<<<<{S{\rm Hom}_R(P,g)} \ar[d]_{\varphi'} &
S{\rm Hom}_R(P,M) \ar[d]^{\varphi} \\
S{\rm Hom}_R(P,\bigoplus_{\beta \in S{\rm Hom}_R(P,N)} Pe_{\beta})   \ar[r]_<<<<<<<{S{\rm Hom}_R(P,h)} & S{\rm Hom}_R(P,N).}
\end{displaymath}
There exists a morphism $u \in {\rm Hom}_R(\bigoplus_{\alpha \in S{\rm Hom}_R(P,M)} Pe_{\alpha},\bigoplus_{\beta \in S{\rm Hom}_R(P,N)} Pe_{\beta})$ such that $\varphi'=S{\rm Hom}_R(P,u)$. Therefore $S{\rm Hom}_R(P, g\varepsilon v)=S{\rm Hom}_R(P,uh\ell)$. So we have $g\varepsilon v=uh\ell$. Thus we can observe that $v|_M: M \rightarrow N$. Set $t:=v|_M$. Now we claim that $S{\rm Hom}_R(P,t)=\varphi$.  For each $\alpha \in S{\rm Hom}_R(P,M)$ we have $(\alpha t)S{\rm Hom}_R(P,\ell)= \alpha t \ell = \alpha \varepsilon v= (\alpha)S{\rm Hom}_R(P,\varepsilon)\varphi''=(\alpha)\varphi S{\rm Hom}_R(P,\ell)$.  It follows that $\alpha t=(\alpha)\varphi$. So $S{\rm Hom}_R(P,t)=\varphi$. Therefore the functor $S{\rm Hom}_R(P,-):\mathscr{C}_P \rightarrow S{\rm Mod}$ is an equivalence of categories.
\end{proof}

Let $R$ be a ring and $\mathcal{U}$ be a non-empty set of unitary left $R$-modules. A unitary left $R$-module $M$ is said to be {\it generated by $\mathcal{U}$} if for every pair of distinct morphisms $f,g:M \rightarrow B$ in $R$Mod, there exists a morphism $h: U \rightarrow M$ with $U \in \mathcal{U}$ and $hf \neq hg$. Also, $\mathcal{U}$ is called a {\it generating set} for $R$Mod if every unitary left $R$-module generated by $\mathcal{U}$.  A unitary left $R$-module $U$ is called {\it generator} in $R$Mod if the set $\mathcal{U}=\lbrace U \rbrace$ is a generating set for $R$Mod (see \cite[Definition 13.1]{wi}).\\

Let $R$ and $S$ be two rings and $P$ be a unitary $R$-$S$-bimodule. We define the mapping $\mu: S \rightarrow {\rm End}_R(P)$ via $(s)\mu=\mu_s:x  \mapsto xs$ for each $s \in S$. It is easy to see that $\mu$ is a ring homomorphism. Moreover we define the mapping $\lambda: R \rightarrow {\rm End}_S(P)$ via $(r)\lambda=\lambda_r: x \mapsto rx$ for each $r \in R$. It is not difficult to show that $\lambda$ is a ring homomorphism. The bimodule $P$ is called {\it balanced} if $\lambda$ and $\mu$ are monomorphisms, $({\rm Im}\mu)({\rm End}_R(P))={\rm Im}\mu$ and $({\rm End}_S(P))({\rm Im}\lambda)={\rm Im}\lambda$, where $({\rm Im}\mu)({\rm End}_R(P))=\lbrace \sum_{i=1}^n\mu_{s_i}f_i~|~ n\in {\Bbb{N}}, ~s_i \in S ~{\rm and}~ f_i \in {\rm End}_R(P) \rbrace$ and $({\rm End}_S(P))({\rm Im}\lambda)=\lbrace \sum_{i=1}^m g_i \circ \lambda_{r_i}~|~ m\in {\Bbb{N}},~ r_i \in R ~{\rm and}~ g_i \in {\rm End}_S(P)\rbrace$ (see \cite{am}). The rings $R$ and $S$ are said to be {\it Morita equivalent} if there exists an additive covariant equivalence between the category of unitary left $R$-modules and the category of unitary left $S$-modules (see \cite{a}).\\

\begin{Cor}\label{p22}
Let $R$ and $S$ be two rings. Then the following conditions are equivalent.
\begin{itemize}
\item[$(a)$] The rings $R$ and $S$ are Morita equivalent.
\item[$(b)$] There exists a balanced unitary $R$-$S$-bimodule $P$ such that $_RP$ and $P_S$ are locally projective generators in $R{\rm Mod}$ and ${\rm Mod}S$, respectively.
\item[$(c)$] There exists a unitary $R$-$S$-bimodule $P$ such that:
\begin{itemize}
\item[$(i)$] $_RP$ is a locally projective generator in $R{\rm Mod}$;
\item[$(ii)$] The mapping $\mu: S \rightarrow {\rm End}_R(P)$ is a ring monomorphism and $({\rm Im}\mu)({\rm End}_R(P))={\rm Im}\mu$;
\item[$(iii)$] $Pf$ is a finitely generated left $R$-module for each idempotent $f \in S$.
\end{itemize}
\end{itemize}
\end{Cor}
\begin{proof}
$(a) \Rightarrow (b)$. It follows from \cite[Theorem 2.4]{am}.\\
$(b) \Rightarrow (c)$. It follows from the proof of \cite[Theorem 2.1]{am}.\\
$(c) \Rightarrow (a)$. It follows from Proposition \ref{a8} (see also \cite[Lemma 1.10 and Theorem 2.4]{am}).
\end{proof}

Finally we give a version of the Morita theorem over rings with local units in terms of their the full subcategory of finitely generated projective unitary modules.

\begin{The}\label{d}
Let $R$ and $S$ be two rings. Then the following conditions are equivalent.
\begin{itemize}
\item[$(a)$] The rings $R$ and $S$ are Morita equivalent.
\item[$(b)$] There exists an additive covariant equivalence ${\rm Proj}R\rightarrow {\rm Proj}S$ which preserves and reflects finitely generated projective unitary modules.
\item[$(c)$] There exists an additive covariant equivalence ${\rm proj}R \rightarrow {\rm proj}S$.
\end{itemize}
\end{The}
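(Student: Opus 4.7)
The plan is to prove the circle $(a)\Rightarrow(b)\Rightarrow(c)\Rightarrow(a)$, with the last being the substantive direction.

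For $(a)\Rightarrow(b)$, any covariant equivalence $H:R{\rm Mod}\to S{\rm Mod}$ between the two Grothendieck categories is automatically additive and exact, so it restricts to an additive equivalence ${\rm Proj}R\to{\rm Proj}S$ (projectivity is a categorical notion). It also preserves and reflects finitely generated objects, since finite generation of a unitary module is the categorical condition that every directed family of proper subobjects summing to the module itself is not cofinal, and this is invariant under equivalence. For $(b)\Rightarrow(c)$, the assumed equivalence simply restricts further to ${\rm proj}R\to{\rm proj}S$.

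The content lies in $(c)\Rightarrow(a)$. Given an additive equivalence $F:{\rm proj}R\to{\rm proj}S$, I would first apply $F$ to the split direct system $\{Re,\alpha_{ee'},\rho_{e'e}\}_{e\in E}$ of Remark \ref{p3} (with $E$ the set of idempotents of $R$) to obtain a split direct system $\{F(Re),F(\alpha_{ee'}),F(\rho_{e'e})\}_{e\in E}$ in ${\rm proj}S$. Set $P={\underrightarrow{\lim}}_{E}F(Re)$ in $S{\rm Mod}$ and $T={\underrightarrow{\lim}}_{E}{\rm End}_S(F(Re))$. By Lemma \ref{a1}, $P$ is a locally projective left $S$-module, $T$ is a subring of ${\rm End}_S(P)$ with $T{\rm End}_S(P)=T$, $P$ is a unitary right $T$-module, and $Pt$ is finitely generated for every idempotent $t\in T$. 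Because $F$ is an equivalence, for each $e\in E$ it induces a ring isomorphism ${\rm End}_R(Re)\cong{\rm End}_S(F(Re))$; composing with the natural $eRe\cong{\rm End}_R(Re)$ and verifying compatibility with the transition maps, these isomorphisms assemble at the colimit into a ring isomorphism $R\cong T$ via Remark \ref{p3}. Thus $P$ becomes a unitary $S$-$R$-bimodule, and the hypotheses of Corollary \ref{p22} (applied with the roles of $R$ and $S$ interchanged) are all in place except that $_SP$ must still be shown to be a generator of $S{\rm Mod}$.

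The main obstacle is precisely this generator condition. To secure it, I would fix a quasi-inverse $G:{\rm proj}S\to{\rm proj}R$ of $F$. For each idempotent $f\in S$, $G(Sf)\in{\rm proj}R$ is a direct summand of some $\bigoplus_{i=1}^{n}Re_{i}$, and applying $F$ yields that $Sf\cong FG(Sf)$ is a direct summand of $\bigoplus_{i=1}^{n}F(Re_{i})$. By Lemma \ref{a1}(a), each $F(Re_{i})$ is a split quotient of $P$, so $Sf$ is a quotient of a finite direct sum of copies of $P$. Since $\{Sf:f^2=f\in S\}$ is a generating set for $S{\rm Mod}$ by Remark \ref{re77}, every unitary left $S$-module is a quotient of a direct sum of copies of $P$, and Lemma \ref{L1} then identifies $P$ as a generator. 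With all conditions verified, Corollary \ref{p22} produces the desired Morita equivalence between $R$ and $S$.
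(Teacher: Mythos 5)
Your proposal is correct, and for the substantive direction $(c)\Rightarrow(a)$ it is essentially the paper's own argument in mirror image: the paper transports the split direct system $\lbrace Sf\rbrace$ along an equivalence $H:{\rm proj}S\rightarrow{\rm proj}R$, forms the locally projective direct limit $P$, proves $P$ is a generator by the same split-summand bookkeeping you use, identifies $S$ with ${\underrightarrow{\lim}}_E{\rm End}_R(H(Sf))$ exactly as you identify $R$ with $T$, and then closes by citing \cite[Theorem 2.5]{am} together with Lemma \ref{a1}, where you instead close by feeding the data into Corollary \ref{p22}(c); since Corollary \ref{p22} is already established in the paper, your ending is self-contained and arguably cleaner. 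The one genuinely different piece is $(a)\Rightarrow(b)$: the paper realizes the equivalence concretely as $S{\rm Hom}_R(P,-)$ via Corollary \ref{p22} and \cite[Theorem 2.4]{am}, then checks exactness (Lemma \ref{a4}), preservation of direct sums (Remark \ref{re4}), and preservation and reflection of finite generation by explicit epimorphism arguments, whereas you argue abstractly that projectivity and finite generation are categorical invariants of the Grothendieck category of unitary modules. That is valid and shorter, but it silently uses that the elementary notion of finite generation ($M=\sum_{i=1}^{n}Rx_i$) agrees with the categorical one ($M$ is not the directed union of proper subobjects) for unitary modules over a ring with local units; this is true and easy to verify, but it is the one point you should make explicit, since the whole theorem is about categories where the usual identity-element arguments are unavailable.
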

\begin{proof}
$(a)\Rightarrow(b)$. Let $R$ and $S$ be two rings which are Morita equivalent. Then there is an additive covariant equivalence $H: R{\rm Mod} \rightarrow S{\rm Mod}$ with the inverse equivalence $G: S{\rm Mod} \rightarrow R{\rm Mod}$. Then by \cite[Theorem 2.4]{am}, $H \simeq S{\rm Hom}_R(P,-)$ and $G \simeq P \otimes_S-$ for some unitary $R$-$S$-bimodule $P$ such that it is a locally projective generator left $R$-module and $Pf$ is a finitely generated left $R$-module for each idempotent $f \in S$. It is easy to see that the functor $S{\rm Hom}_R(P,-): {\rm Proj}R\rightarrow {\rm Proj}S$ is an additive equivalence with the inverse equivalence $P\otimes_S-: {\rm Proj}S\rightarrow {\rm Proj}R$. Now we show that it preserves finitely generated unitary modules. Let $X$ be a finitely generated unitary left $R$-module.  Then there exists an $S$-module epimorphism $ \bigoplus_{i \in I} Sf_i \rightarrow S{\rm Hom}_R(P,X)$, where $\lbrace f_i~|~ i\in I\rbrace$ is a set of idempotents of $S$. Since the functor $S{\rm Hom}_R(P,-): R{\rm Mod} \rightarrow S{\rm Mod}$ is dense and preserves direct sums,  we have an $S$-module epimorphism $\gamma: S{\rm Hom}_R(P,\bigoplus_{i \in I} Y_i) \rightarrow S{\rm Hom}_R(P,X)$ for some unitary left $R$-modules $Y_i$. Since the functor $S{\rm Hom}_R(P,-): R{\rm Mod} \rightarrow S{\rm Mod}$ is full, there exists an $R$-module epimorphism $h: \bigoplus_{i \in I} Y_i \rightarrow X$ such that $\gamma= S{\rm Hom}_R(P,h)$. Hence there exists an $R$-module epimorphism $h': \bigoplus_{i \in J} Y_i \rightarrow X$, where $J$ is a finite subset of $I$. It follows that $S{\rm Hom}_R(P,h'): S{\rm Hom}_R(P,\bigoplus_{i \in J} Y_i) \rightarrow S{\rm Hom}_R(P,X)$ is an $S$-module epimorphism. So there exists an $S$-module epimorphism $\bigoplus_{i \in J} Sf_i \rightarrow S{\rm Hom}_R(P,X)$. Therefore $S{\rm Hom}_R(P,X)$ is finitely generated. By the similar argument we can see that it reflects finitely generated unitary left $S$-modules. \\
$(b) \Rightarrow (c)$ is clear.\\
$(c) \Rightarrow (a)$.  Let $H: {\rm proj}S \rightarrow {\rm proj}R$ be an additive covariant equivalence of categories. We know that $\lbrace Sf, \alpha_{ff'}, \rho_{f'f}~|~f,f' \in E' \rbrace$ is a split direct system of finitely generated projective unitary left $S$-modules, where $E'$ is the set of all idempotents of $S$. So $\lbrace H(Sf), H(\alpha_{ff'}), H(\rho_{f'f})~|~f,f' \in E' \rbrace$ is a split direct system of finitely generated projective unitary left $R$-modules. Let $P$ be the direct limit of this direct system. Then $P$ is a locally projective left $R$-module. First we show that $P$ is a generator in $R$Mod. Let $X$ be a unitary left $R$-module. Then there exists an $R$-module epimorphism $\bigoplus_{i \in I}Re_i \rightarrow X$, where $\lbrace e_i~|~i \in I\rbrace$ is a set of idempotents of $R$. Also for each $i \in I$, $Re_i \cong H(Q_i)$ as $R$-modules for some finitely generated projective unitary left $S$-module $Q_i$. Since each $Q_i$ is finitely generated, there exists an $S$-module epimorphism $\bigoplus_{j \in I_i}Sf_{ij} \rightarrow Q_i$ where $I_i$ is a finite set and each $f_{ij}$ is an idempotent of $S$. Hence there exists an $R$-module epimorphism $H(\bigoplus_{j \in I_i}Sf_{ij}) \rightarrow H(Q_i)$ because $Q_i$ is a projective unitary left $S$-module. Since $H$ is additive, there exists an $R$-module epimorphism $\bigoplus_{j \in I_i}H(Sf_{ij}) \rightarrow H(Q_i)$.  Therefore there exists an $R$-module epimorphism $\bigoplus_{i \in I}\bigoplus_{j \in I_i}H(Sf_{ij}) \rightarrow X$. By Lemma \ref{a1} we have an $R$-module epimorphism $P \rightarrow H(Sf_{ij})$ for each $i \in I$ and $j \in I_i$. We conclude that there exists an $R$-module epimorphism $\bigoplus_{J}P \rightarrow X$ where $J$ is a set. Consequently $P$ is a generator in $R$Mod.  Now we show that $S \cong \underrightarrow{\lim}_{E'}{\rm End}_R(H(Sf))$ as rings. For each $f, f' \in E'$, we define $\Omega_{ff'}: {\rm End}_R(H(Sf)) \rightarrow {\rm End}_R(H(Sf'))$ via $g \longmapsto H(\rho_{f'f})gH(\alpha_{ff'})$. It is easy to see that $({\rm End}_R(H(Sf)), \Omega_{ff'})_{f,f' \in E'}$ is a direct system of rings and also ${\rm End}_R(H(Sf)) \cong {\rm End}_S(Sf) \cong fSf$ as rings for each $f \in E'$. We call this isomorphism $\gamma_f: fSf  \rightarrow {\rm End}_R(H(Sf))$. On the other hand, $S$ is a direct limit of $(fSf, t_{ff'})_{f,f' \in E'}$, where $t_{ff'}:fSf \rightarrow f'Sf'$ is the canonical injections. It is not difficult to show that the following diagram is commutative
\begin{displaymath}
\xymatrix{
fSf  \ar[d]_{t_{ff'}} \ar[r]^<<<<{\gamma_f}&
{\rm End}_R(H(Sf)  \ar[d]^{\Omega_{ff'}} \\
f'Sf' \ar[r]^<<<<{\gamma_{f'}}& {\rm End}_R(H(Sf')).}
\end{displaymath}
Therefore $S \cong \underrightarrow{\lim}_{E'}{\rm End}_R(H(Sf))$. Consequently by \cite[Theorem 2.5]{am} and Lemma \ref{a1}, the ring $R$ is Morita equivalent to the ring $S$.
\end{proof}

\section{Azumaya-Morita duality for rings with local units}

In this section, in order to have a duality between the category of finitely generated unitary left $R$-modules and the category of finitely generated unitary right $S$-modules we show that it is sufficient that $R$ be a left locally finite ring and $S$ be a right locally finite ring (see Theorem \ref{rdu3}). We conclude this section by giving a version of the Azumaya-Morita duality theorem for rings with local units in terms of their full subcategory of finitely generated projective unitary modules and full subcategory of finitely generated injective unitary modules (see Corollary \ref{rdu4}).\\

Consider the following properties for a unitary $R$-$S$-bimodule $U$:
\begin{itemize}
\item[(I)] For all idempotents $f \in S$ and $e \in R$, $Uf$ and $eU$ are finitely generated injective unitary left $R$-modules and right $S$-modules, respectively.
\item[(II)]  $\lbrace Uf~|~f^2=f \in S \rbrace$ and $\lbrace eU~|~e^2=e \in R \rbrace$ are cogenerating sets for $R$Mod and Mod$S$, respectively.
\item[(III)] $U$ is a balanced $R$-$S$-bimodule.
\end{itemize}

Let $R$ and $S$ be two rings with identity. We recall from \cite{and} that a balanced $R$-$S$-bimodule $U'$ is called Morita duality in case $_RU'$ and $U'_S$ are injective cogenerators in $R$-Mod and Mod-$S$, respectively. Every $R$-$S$-bimodule which has the properties $(I)$-$(III)$ is Morita duality. But by using \cite[Propositions 29.8 and 47.8]{wi}, the converse holds if  either $R$ is a left artinian ring or $S$ is a right artinian ring. Now we define Morita duality bimodule for rings with local units.

\begin{Def}\label{Def1}
A unitary $R$-$S$-bimodule $U$ is {\rm Morita duality} if it has the properties  $(I)$-$(III)$.
\end{Def}

Let $\mathscr{C}$ and $\mathscr{C}'$ be two additive categories. An additive contravariant functor $F: \mathscr{C} \rightarrow \mathscr{C}'$ is called {\it duality} if it is an equivalence of categories. This means that there exists an additive contravariant functor $G: \mathscr{C}' \rightarrow \mathscr{C}$ with natural isomorphisms $GF \simeq {\rm id}_{\mathscr{C}}$ and $FG \simeq {\rm id}_{\mathscr{C}'}$ (see \cite{wi}). A ring $R$ is called {\it left {\rm (resp.,} right{\rm )} locally finite} if every finitely generated unitary left (resp., right) $R$-module has finite length (see \cite{wis}). \\

The following theorem, which is the main theorem of this section, gives a generalization of the Azumaya-Morita duality theorem for rings with local units.

\begin{The}\label{rdu3}
Let $R$ and $S$ be two rings. Then the following conditions are equivalent.
\begin{itemize}
\item[$(a)$] There exists a duality between the category of finitely generated unitary left $R$-modules and the category of finitely generated unitary right $S$-modules.
\item[$(b)$] $R$ is a left locally finite ring and there exists a Morita duality $R$-$S$-bimodule.
\item[$(c)$] $S$ is a right locally finite ring and there exists a Morita duality $R$-$S$-bimodule.
\end{itemize}
In this case, if $U$ is a Morita duality $R$-$S$-bimodule, then the duality  functors are isomorphic to ${\rm Hom}_R(-,U)S: R{\rm mod} \rightarrow {\rm mod}S$ ~~ and ~~ $R{\rm Hom}_S(-,U): {\rm mod}S \rightarrow R{\rm mod}$.
\end{The}

A ring $R$ is called {\it self-duality} if there is a duality $\mathscr{D}: R{\rm mod} \rightarrow {\rm mod}R$. As a consequence of Theorem \ref{rdu3}, we give a characterization of Morita self-duality for rings with local units. Let $\Lambda$ and $\Delta$ be two rings with identity. Then by \cite[Theorem 24.8]{and}, there exists a duality between the category of finitely generated left $\Lambda$-modules and the category of finitely generated right $\Delta$-modules if and only if $\Lambda$ is a left artinian ring and there exists a Morita duality $\Lambda$-$\Delta$-bimodule if and only if $\Delta$ is a right artinian ring and there exists a Morita duality $\Lambda$-$\Delta$-bimodule. As a consequence we can see that $\Lambda$ is a Morita self-duality if and only if $\Lambda$ is an artinian ring and there exists a Morita duality $\Lambda$-$\Lambda$-bimodule.  P. N. \'{A}nh and C. Menini in \cite{ame} modified the concept of linear compactness and gave a characterization of Morita duality over rings with local units which is analogous to the classical case of rings with identity (see \cite[Theorem 1]{ame}). Let $R$ and $S$ be two rings with local units and $\mathscr{C}$ and $\mathscr{D}$ be full subcategories of $R$Mod and Mod$S$, respectively, which are both closed under submodules and factor modules and contain all finitely generated modules. P. N. \'{A}nh and C. Menini gave necessary and sufficient conditions to insure that the category $\mathscr{C}$ is dual to the category $\mathscr{D}$. We know that the category of the finitely generated modules is not necessarily closed under the submodules. In the above theorem we give necessary and sufficient conditions to insure that the category of finitely generated unitary left $R$-modules is dual to the category of finitely generated unitary right $S$-modules. As a consequence we have the following corollaries.

\begin{Cor}{\rm ( \cite[Azumaya-Morita duality theorem]{and})}
Let $R$ and $S$ be two rings with identity. Then the following conditions are equivalent.
\begin{itemize}
\item[$(a)$] There exists a duality between the category of finitely generated left $R$-modules and the category of finitely generated right $S$-modules.
\item[$(b)$] $R$ is a left artinian ring and there exists a Morita duality $R$-$S$-bimodule.
\item[$(c)$] $S$ is a right artinian ring and there exists a Morita duality $R$-$S$-bimodule.
\end{itemize}
In this case, if $U$ is a Morita duality $R$-$S$-bimodule, then the duality functors are isomorphic to
\begin{center}
${\rm Hom}_R(-,U): R${\rm -mod} $\rightarrow$ {\rm mod-}$S$ ~~and~~ ${\rm Hom}_S(-,U): ${\rm mod-}$S \rightarrow R${\rm -mod}.
\end{center}
\end{Cor}

A ring $R$ is called {\it left Morita} if there exists a ring $R'$ with local units and a duality $R{\rm mod} \rightarrow {\rm mod}R'$.

\begin{Cor}\label{rdu4}
Let $R$ and $S$ be two rings. Then the following conditions are equivalent.
\begin{itemize}
\item[$(a)$] There exists a duality between the category of finitely generated unitary left $R$-modules and the category of finitely generated unitary right $S$-modules.
\item[$(b)$] $R$ is a left Morita ring and there exists a duality ${\rm inj}R \rightarrow {\rm proj}S^{\rm op}$.
\end{itemize}
\end{Cor}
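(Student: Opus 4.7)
The plan is to prove both implications by combining Theorem \ref{rdu3} with Theorem \ref{d}.

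For $(a)\Rightarrow(b)$, a duality $F\colon R{\rm mod}\to{\rm mod}S$ immediately witnesses that $R$ is left Morita. I then want to show that $F$ restricts to a duality ${\rm inj}R\to{\rm proj}S^{\rm op}$. By Theorem \ref{rdu3} I may take $F\simeq{\rm Hom}_R(-,U)S$ with inverse $G\simeq R{\rm Hom}_S(-,U)$ for a Morita duality $R$-$S$-bimodule $U$, and moreover $R$ is left locally finite. Writing $U=\varinjlim U_i$ as in property (I), each $U_i$ is a split direct summand of $U$ via $\varphi_i\psi_i$, and by property (II) the idempotent $\psi_i\varphi_i\in{\rm End}_R(U)$ equals $\mu(f_i)$ for an idempotent $f_i\in S$; thus $U_i\cong Uf_i$. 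A direct computation then shows $F(Uf_i)\cong f_iS$ in ${\rm mod}S$ and dually $G(f_iS)\cong Uf_i$ in ${\rm inj}R$. The key reduction is that for any $M\in{\rm inj}R$, the cogenerating property of $\{U_i\}$ in $R{\rm Mod}$ combined with the finite length of $M$ (from left local finiteness) yields an $R$-module monomorphism $M\hookrightarrow\bigoplus_{k=1}^nU_{i_k}$, and since $M$ is injective in $R{\rm Mod}$ this splits, exhibiting $M$ as a direct summand of $\bigoplus_kUf_{i_k}$; hence $F(M)$ is a direct summand of $\bigoplus_k f_{i_k}S$, so $F(M)\in{\rm proj}S^{\rm op}$. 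The reverse inclusion, that $G$ sends ${\rm proj}S^{\rm op}$ into ${\rm inj}R$, follows by the symmetric argument using property (III) and the finite generation/projectivity of objects in ${\rm proj}S^{\rm op}$ as direct summands of finite sums $\bigoplus_k f_{i_k}S$.

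For $(b)\Rightarrow(a)$, since $R$ is left Morita, there is a ring $R'$ with local units and a duality $H\colon R{\rm mod}\to{\rm mod}R'$. Applying the already established direction $(a)\Rightarrow(b)$ to $H$ yields a duality $\bar H\colon{\rm inj}R\to{\rm proj}(R')^{\rm op}$. Composing with the given duality $K\colon{\rm inj}R\to{\rm proj}S^{\rm op}$ gives a covariant additive equivalence $K\circ\bar H^{-1}\colon{\rm proj}(R')^{\rm op}\to{\rm proj}S^{\rm op}$, i.e.\ an equivalence between the categories of finitely generated projective unitary left modules over the opposite rings $(R')^{\rm op}$ and $S^{\rm op}$. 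Theorem \ref{d} applied to these opposite rings shows that $(R')^{\rm op}$ and $S^{\rm op}$ are Morita equivalent, hence so are $R'$ and $S$. Morita equivalence of $R'$ and $S$ yields an equivalence ${\rm mod}R'\simeq{\rm mod}S$ (by Corollary \ref{p22} for right modules, i.e.\ applied to the opposite rings), and composing with $H$ produces the desired duality $R{\rm mod}\to{\rm mod}S$.

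The main obstacle is the reduction step in $(a)\Rightarrow(b)$: passing from the cogenerating property of the set $\{U_i\}$ (which only guarantees an embedding of $M$ into a direct \emph{product} of copies of the $U_i$) to an embedding into a \emph{finite direct sum}, so that the injectivity of $M$ in $R{\rm Mod}$ can be applied to split it off. This is where left local finiteness of $R$ enters in an essential way: the finite composition length of $M$ ensures that only finitely many of the projections $M\to U_i$ detecting the cogenerating property are needed, collapsing the product to a finite sum. Once this step is in place, all the remaining identifications $F(Uf_i)\cong f_iS$ and the symmetric argument for $G$ are routine consequences of the definitions of $\mu$, $\lambda$ and the bimodule structure of $U$.
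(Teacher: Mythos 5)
Your plan is correct and follows the paper's own (very terse) route: the paper proves $(a)\Rightarrow(b)$ by citing Theorem \ref{rdu3} and $(b)\Rightarrow(a)$ by citing Theorem \ref{d}, and your elaboration --- restricting ${\rm Hom}_R(-,U)S$ to a duality ${\rm inj}R\rightarrow {\rm proj}S^{\rm op}$ via finite cogeneration, finite length and splitting, then composing dualities to get a covariant equivalence of projective module categories over the opposite rings --- supplies exactly the bridging details those citations leave implicit. One small slip: to see that $Uf\in {\rm inj}R$ for an \emph{arbitrary} idempotent $f\in S$ (which you need, since objects of ${\rm proj}S^{\rm op}$ are summands of finite sums of $fS$ for arbitrary idempotents $f$) you should invoke properties (I)--(II) --- the idempotent $\mu_f$ factors through some projection $\psi_i$, so $Uf$ is a direct summand of the finitely generated injective $U_i$ --- or alternatively Lemma \ref{z2}(a) applied to $Uf\cong D_2(fS)$, rather than property (III), which concerns the right $S$-module structure of $U$ and does not bear on injectivity in $R{\rm Mod}$.
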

\begin{proof}
$(a) \Rightarrow (b)$. It follows from Theorem \ref{rdu3}.\\
$(b) \Rightarrow (a)$. It follows from Theorem \ref{d}.
\end{proof}

We need some preparation before proving Theorem \ref{rdu3}.\\

Let $R$ and $S$ be two rings and $U$ be a unitary $R$-$S$-bimodule. We recall from \cite{ame} that there is a natural transformation ${\rm id}_{R{\rm Mod}} \rightarrow R{\rm Hom}_S({\rm Hom}_R(-,U)S,U)$ via $R$-module homomorphisms
\begin{center}
$\varphi_X: X \rightarrow R{\rm Hom}_S({\rm Hom}_R(X,U)S,U)$ via $x \mapsto \left[\beta \mapsto (x)\beta\right]$.
\end{center}
Also there is a natural transformation ${\rm id}_{{\rm Mod}S} \rightarrow {\rm Hom}_R(R{\rm Hom}_S(M,U),U)S$ via $S$-module homomorphisms
\begin{center}
$\psi_X: X \rightarrow {\rm Hom}_R(R{\rm Hom}_S(X,U),U)S$ via $x \mapsto \left[\alpha \mapsto \alpha(x) \right]$.
\end{center}
A unitary left $R$-module (resp., right $S$-module) $X$ is called {\it $U$-reflexive} if $\varphi_X$ (resp., $\psi_X$) is an isomorphism (see \cite{ame}).\\

A left $R$-module $M$ is called {\it finitely cogenerated} if for each family $\lbrace K_i ~|~ i\in I \rbrace$ of submodules of $M$ with $\bigcap_{i\in I}K_i=0$, there exists a finite subset $I'$ of $I$ such that $\bigcap_{i\in I'}K_i=0$ (see \cite{wi}).

\begin{Pro}\label{ref3}
Let $R$ and $S$ be two rings and $U$ be a Morita duality $R$-$S$-bimodule. Then
each finitely generated unitary left $R$-module (resp., right $S$-module) is $U$-reflexive. Also if $R$ (resp., $S$) is a left (resp., right) locally finite ring, then \begin{itemize}
\item[$(a)$] For each finitely generated unitary left $R$-module $X$, ${\rm Hom}_R(X,U)S$ is a finitely generated unitary right $S$-module.
\item[$(b)$] For each finitely generated unitary right $S$-module $Y$, $R{\rm Hom}_S(Y,U)$ is a finitely generated unitary left $R$-module.
\end{itemize}
\end{Pro}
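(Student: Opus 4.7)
The plan has two halves. For the reflexivity assertion, I would first verify that the hypotheses of Lemma~\ref{ref1} are satisfied in our setting: the Morita duality property (IV) asserts that $\lambda:R\to {\rm End}_S(U)$ is a monomorphism whose image consists precisely of the $S$-module endomorphisms of $U$ factoring through some projection $\psi'_i$, so in particular any composition $g\circ\lambda_r$ still factors through that same $\psi'_i$, giving $({\rm End}_S(U))({\rm Im}\lambda)={\rm Im}\lambda$; symmetrically (II) yields the hypothesis for Lemma~\ref{ref1}(b). Thus $Re$ is $U$-reflexive for every idempotent $e\in R$, and $fS$ is $U$-reflexive for every idempotent $f\in S$. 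Using Remark~\ref{re77} we write a finitely generated $X\in R{\rm Mod}$ as a quotient of a finite direct sum $\bigoplus_{i=1}^{n}Re_{i}$; Remark~\ref{re12} gives that this finite direct sum is $U$-reflexive, and Lemma~\ref{ref2} gives that the quotient $X$ is $U$-reflexive. The same scheme, with right $S$-modules, handles the right-hand statement.

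For part (a), the strategy is to replace $X$ by something that maps \emph{onto} ${\rm Hom}_R(X,U)S$ built from the split direct system in property (I). Since $R$ is left locally finite, the finitely generated unitary module $X$ has finite length, hence is finitely cogenerated. As $\{U_i\}_{i\in I}$ is a cogenerating set for $R{\rm Mod}$, $X$ embeds into a product of copies of the $U_i$'s, and Remark~\ref{re13} then gives a monomorphism $X\hookrightarrow\bigoplus_{k=1}^{n}U_{i_k}$. Applying the exact functor ${\rm Hom}_R(-,U)S$ (Lemma~\ref{re11}(a)) yields a surjection $\bigoplus_{k=1}^{n}{\rm Hom}_R(U_{i_k},U)S\twoheadrightarrow {\rm Hom}_R(X,U)S$. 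The key computation is then that each summand is cyclic: since $\varphi_{i_k}\psi_{i_k}={\rm id}_{U_{i_k}}$, the idempotent $\psi_{i_k}\varphi_{i_k}\in{\rm End}_R(U)$ lies in ${\rm Im}\mu$ by (II), say equals $\mu_{f_{i_k}}$ for an idempotent $f_{i_k}\in S$, so $U_{i_k}\cong Uf_{i_k}$ as left $R$-modules; combining the standard identification $R{\rm Hom}_S(fS,U)\cong Uf$ with the $U$-reflexivity of $fS$ (Lemma~\ref{ref1}(b)) gives ${\rm Hom}_R(Uf,U)S\cong fS$. Hence each ${\rm Hom}_R(U_{i_k},U)S$ is cyclic, so ${\rm Hom}_R(X,U)S$ is finitely generated.

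Part (b) is entirely symmetric: use (III) to embed a finite-length $Y\in{\rm mod}S$ into a finite direct sum $\bigoplus_{k=1}^{m}U'_{j_k}$ from the right-side cogenerating split system; use (IV) to identify each $U'_{j_k}\cong e_{j_k}U$ for an idempotent $e_{j_k}\in R$; then Lemma~\ref{re11}(b) plus $R{\rm Hom}_S(eU,U)\cong Re$ (from the computation inside Lemma~\ref{ref1}(a) and $U$-reflexivity of $Re$) exhibits $R{\rm Hom}_S(Y,U)$ as a quotient of a finite direct sum of cyclic modules $Re_{j_k}$.

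The main obstacle I anticipate is purely bookkeeping: carefully justifying the identifications $U_{i_k}\cong Uf_{i_k}$ and ${\rm Hom}_R(Uf,U)S\cong fS$ (and their right-side analogues) against the fact that, without an identity, one must work consistently with the unitary parts ${\rm Hom}_R(-,U)S$ rather than the full Hom-groups. Once those identifications are in place, everything else follows formally from exactness of the two Hom functors and the cogenerator+finite-cogeneration argument.
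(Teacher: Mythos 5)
Your treatment of the reflexivity assertion coincides with the paper's (Lemma \ref{ref1} applied to the cyclic modules $Re$ and $fS$, Remark \ref{re12} for finite direct sums, Lemma \ref{ref2} for quotients), and your argument for part $(a)$ under the hypothesis that $R$ is left locally finite is a correct variant of the paper's: where the paper presents ${\rm Hom}_R(X,U)S$ by $\bigoplus_{l\in A}f_lS$, dualizes, and uses reflexivity of $X$ to land in $R\prod_{l}R{\rm Hom}_S(f_lS,U)$ before invoking finite cogeneration and Remark \ref{re13}, you embed $X$ directly into a finite direct sum of the cogenerating injectives $U_{i_k}$ and then identify each ${\rm Hom}_R(U_{i_k},U)S\cong {\rm Hom}_R(Uf_{i_k},U)S\cong f_{i_k}S$. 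Both routes rest on the same pillars (finite length $\Rightarrow$ finitely cogenerated, Remark \ref{re13}, exactness from Lemma \ref{re11}), and the identifications you worry about do go through: $\psi_{i}\varphi_{i}=\mu_{f_i}$ gives $U_i\cong Uf_i$, and ${\rm Hom}_R(Uf,U)S\cong fS$ is exactly the right-hand analogue of the computation inside Lemma \ref{ref1}(a).

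The genuine gap is in part $(b)$. The proposition is meant to deliver \emph{both} $(a)$ and $(b)$ from the single hypothesis that $R$ is left locally finite (and, respectively, both from $S$ right locally finite); this is precisely how it is used in the proof of Theorem \ref{rdu3}, where only ``$R$ is left locally finite'' is available and yet $R{\rm Hom}_S(-,U)$ must be shown to preserve finitely generated modules. Your argument for $(b)$ starts from ``a finite-length $Y\in{\rm mod}S$,'' which presupposes that $S$ is right locally finite — information you do not have in the cross case. So your ``entirely symmetric'' scheme only yields the diagonal implications ($R$ left locally finite $\Rightarrow(a)$, $S$ right locally finite $\Rightarrow(b)$) and misses the case that is actually needed downstream. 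The paper handles $(b)$ under ``$R$ left locally finite'' by a different mechanism that never invokes finite cogeneration of $Y$: choose an epimorphism $\bigoplus_{l=1}^{m}f_lS\rightarrow Y$ (available for any finitely generated unitary $Y$), dualize to embed $R{\rm Hom}_S(Y,U)$ into $\bigoplus_{l=1}^{m}Uf_l$, and then observe that each $Uf_l={\rm Im}\mu_{f_l}$ is contained in some ${\rm Im}\varphi_k\cong U_k$, which is finitely generated and hence noetherian because $R$ is left locally finite; a submodule of a noetherian module is finitely generated. You would need to add this (or an equivalent) argument to close the gap.
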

\begin{proof}
Let $X$ be a finitely generated unitary left $R$-module. Then there exists an $R$-module epimorphism $\bigoplus_{l=1}^nRe_l \rightarrow X$, where $n \in {\Bbb{N}}$ and each $e_l$ is an idempotent of $R$. Since by \cite[Proposition 20.15]{and}, each $Re_l$ is $U$-reflexive, $\bigoplus_{l=1}^nRe_l$ is $U$-reflexive. Since $Uf$ and $eU$ are injective unitary left $R$-modules and right $S$-module for all idempotents $f \in S$ and $e \in R$, respectively, the functors ${\rm Hom}_R(-,U)S$ and  $R{\rm Hom}_S(-,U)$ are exact. From $\lbrace Uf~|~ f^2=f \in S \rbrace$ and $\lbrace eU~|~ e^2=e \in R \rbrace$ are cogenerating sets for $R{\rm Mod}$ and ${\rm Mod}S$, respectively we get that  every factor of $U$-reflexive modules is $U$-reflexive. This implies that $X$ is $U$-reflexive. \\
$(a)$. Assume that $R$ is a left locally finite ring.   We know that there exists an $S$-module epimorphism $g: \bigoplus_{l \in A}f_lS \rightarrow {\rm Hom}_R(X,U)S$, where $A$ is a set and each $f_l$ is an idempotent of $S$. Then $R{\rm Hom}_S(g,U): R{\rm Hom}_S({\rm Hom}_R(X,U)S,U) \rightarrow R{\rm Hom}_S(\bigoplus_{l \in A}f_lS,U)$ is an $R$-module monomorphism. Since $X$ is $U$-reflexive and $R{\rm Hom}_S(\bigoplus_{l \in A}f_lS,U) \cong R\prod_{l \in A}R{\rm Hom}_S(f_lS, U)$ as $R$-modules, there exists an $R$-module monomorphism $\gamma: X \rightarrow R\prod_{l \in A}R{\rm Hom}_S(f_lS, U)$. On the other hand, since $R$ is left locally finite, $X$ has finite length and hence by \cite[Proposition 31.1]{wi}, it is finitely cogenerated. Therefore there exists a finite subset $B$ of $A$ such that $\gamma \pi$ is a monomorphism, where $\pi: R\prod_{l \in A}R{\rm Hom}_S(f_lS, U) \rightarrow \bigoplus_{l \in B}R{\rm Hom}_S(f_lS, U)$ is the canonical projection. Since $\bigoplus_{l \in B}R{\rm Hom}_S(f_lS, U) \cong R{\rm Hom}_S(\bigoplus_{l \in B}f_lS, U),$ as $R$-modules, there exists an $R$-module monomorphism $X \rightarrow R{\rm Hom}_S(\bigoplus_{l \in B}f_lS, U)$. Since the functor ${\rm Hom}_R(-,U)S$ is exact, we have an $S$-module epimorphism ${\rm Hom}_R(R{\rm Hom}_S(\bigoplus_{l \in B}f_lS, U),U)S \rightarrow {\rm Hom}_R(X,U)S$. Since $\bigoplus_{l \in B}f_iS$ is $U$-reflexive, there exists an $S$-module epimorphism  $\bigoplus_{l \in B}f_lS \rightarrow {\rm Hom}_R(X,U)S$. Therefore ${\rm Hom}_R(X,U)S$ is a finitely generated right $S$-module. Similarly, we can see that the statement $(b)$ holds when $S$ is a right locally finite ring. \\
$(b)$. Assume that $R$ is a left locally finite ring and $Y$ is a finitely generated unitary right $S$-module. Then there exists an $S$-module epimorphism $\bigoplus_{l=1}^mf_lS \rightarrow Y$, where $m \in {\Bbb{N}}$ and each $f_l$ is an idempotent of $S$.  Hence there exists an $R$-module monomorphism $R{\rm Hom}_S(Y,U) \rightarrow \bigoplus_{l=1}^mR{\rm Hom}_S(f_lS,U)$. Since $R{\rm Hom}_S(f_lS,U)\cong Uf_l$ as $R$-modules, there exists an $R$-module monomorphism $R{\rm Hom}_S(Y,U) \rightarrow \bigoplus_{l=1}^mUf_l$. From each $Uf_l$ is finitely generated and $R$ is left locally finite we can see that each  $Uf_l$ has finite length and so $R{\rm Hom}_S(Y,U)$ is finitely generated. Similarly, we can see that the statement $(a)$ holds when $S$ is a right locally finite ring.
\end{proof}

\begin{Lem}\label{4.3}
Let $R$ and $S$ be two rings and $U$ be a unitary $R$-$S$-bimodule.
\begin{itemize}
\item[$(a)$] If $R{\rm Hom}_S(-,U): {\rm mod}S \rightarrow R{\rm mod}$ is dense and $Uf$ is an injective module in $R{\rm Mod}$ for each idempotent $f \in S$, then $\lbrace Uf~|~f^2=f \in S \rbrace$ is a cogenerating set for $R{\rm Mod}$.
\item[$(b)$] If ${\rm Hom}_R(-,U)S: R{\rm mod} \rightarrow {\rm mod}S$ is dense and $eU$ is an injective module in ${\rm Mod}S$ for each idempotent $e \in R$, then $\lbrace eU~|~e^2=e \in R \rbrace$ is a cogenerating set for ${\rm Mod}S$.
\end{itemize}
\end{Lem}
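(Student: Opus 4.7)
The plan is to prove $(a)$ via the standard pointwise criterion: given $M\in R{\rm Mod}$ and a nonzero $x\in M$, I will produce an idempotent $f\in S$ and an $R$-homomorphism $\beta:M\to Uf$ with $(x)\beta\neq 0$. Since $M$ is unitary and $RR=R$, the cyclic submodule $Rx$ is a finitely generated unitary left $R$-module, so $Rx\in R{\rm mod}$. By the density hypothesis there exist $N\in{\rm mod}S$ and an $R$-module isomorphism $\sigma:R{\rm Hom}_{S}(N,U)\to Rx$. Since $N$ is a finitely generated unitary right $S$-module, it is a quotient of some $\bigoplus_{l=1}^{k}f_lS$ with each $f_l$ an idempotent of $S$, via an $S$-epimorphism $\phi:\bigoplus_{l=1}^{k}f_lS\to N$.

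Next I apply the contravariant, left-exact functor $R{\rm Hom}_{S}(-,U)$ to $\phi$, turning it into an $R$-monomorphism
$$R{\rm Hom}_{S}(N,U)\hookrightarrow R{\rm Hom}_{S}\bigl(\bigoplus_{l=1}^{k}f_lS,U\bigr)\cong\bigoplus_{l=1}^{k}R{\rm Hom}_{S}(f_lS,U)\cong\bigoplus_{l=1}^{k}Uf_l,$$
where the last identification is by evaluation at $f_l$ together with the observation that each $Uf_l$ is already unitary as a left $R$-module, so every $S$-homomorphism $f_lS\to U$ automatically lies in the unitary part $R{\rm Hom}_{S}(f_lS,U)$. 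Composing with $\sigma^{-1}$ produces an $R$-monomorphism $\iota:Rx\hookrightarrow\bigoplus_{l=1}^{k}Uf_l$. Since $x\neq 0$, the image $(x)\iota$ has a nonzero entry in some summand $Uf_{l_0}$, and projecting onto that summand yields an $R$-homomorphism $\alpha:Rx\to Uf_{l_0}$ with $(x)\alpha\neq 0$. By hypothesis $Uf_{l_0}$ is injective in $R{\rm Mod}$, so $\alpha$ extends along the inclusion $Rx\hookrightarrow M$ to the desired $\beta:M\to Uf_{l_0}$ satisfying $(x)\beta\neq 0$, exhibiting the cogenerating property.

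Part $(b)$ is obtained by the entirely symmetric argument with $R$ and $S$, left and right, interchanged; the key inputs are density of ${\rm Hom}_R(-,U)S$, the evaluation isomorphism ${\rm Hom}_R(eR,U)S\cong eU$, and injectivity of each $eU$ in ${\rm Mod}S$. The only mildly delicate formal point, where I expect the main technical care, is verifying that $R{\rm Hom}_{S}(-,U)$ and ${\rm Hom}_R(-,U)S$ are left exact and that they commute with finite direct sums in the first variable: one must keep track that passing to the largest unitary submodule does not spoil kernels and that the evaluation isomorphism $R{\rm Hom}_S(f_lS,U)\cong Uf_l$ preserves unitariness on both sides. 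After these bookkeeping checks, the argument reduces to the familiar Baer-style injective extension from a cyclic submodule.
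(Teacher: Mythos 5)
Your argument is correct and follows essentially the same route as the paper's proof: reduce to the cyclic submodule $Rx$, use density to identify a finitely generated unitary left $R$-module with some $R{\rm Hom}_S(Y,U)$, dualize a presentation $\bigoplus_{l}f_lS \twoheadrightarrow Y$ into an $R$-monomorphism into $\bigoplus_{l}Uf_l$, pick a summand on which the relevant element survives, and extend by injectivity of $Uf_{l_0}$. The only (harmless) difference is that the paper first passes to the quotient $Rx/K$ by a maximal submodule $K$ containing the kernel of the map into $M$ before invoking density, whereas you apply density to $Rx$ itself; both versions work.
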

\begin{proof}
$(a).$ Assume that $R{\rm Hom}_S(-,U): {\rm mod}S \rightarrow R{\rm mod}$ is dense and $Uf$ is an injective module in $R{\rm Mod}$ for each idempotent $f \in S$. We show that $\lbrace Uf~|~f^2=f \in S \rbrace$ is a cogenerating set for $R{\rm Mod}$. Let $g: X \rightarrow M$ be a non-zero morphism in $R$Mod. We show that there exists an $R$-module homomorphism $\gamma: M \rightarrow Uf$ such that $g\gamma \neq 0$, where $f^2=f \in S$. Let $x \in X$ such that $(x)g\neq 0$. Since there exists an idempotent $e^2=e \in R$ such that $ex=x$, $Rx \neq 0$. Let $\varepsilon: Rx \rightarrow X$ be the canonical injection. Then $\varepsilon g \neq 0$. Since $Rx$ is finitely generated, there exists a maximal submodule $K$ of $Rx$ such that ${\rm Ker}\varepsilon g \subseteq K$. Let $\pi: Rx/{\rm Ker}\varepsilon g \rightarrow Rx/K$ be the canonical projection.  Consider the following diagram
\begin{displaymath}
\xymatrix{
0 \ar[r] & Rx/{\rm Ker}\varepsilon g  \ar[d]_{\pi} \ar[r]^<<<<{\varepsilon g}&
M \\
& Rx/K. & }
\end{displaymath}
Since $Rx/K$ is a non-zero finitely generated unitary left $R$-module, there exists a non-zero finitely generated unitary right $S$-module $Y$ such that $Rx/K \cong R{\rm Hom}_S(Y,U)$ as $R$-modules. Since $Y$ is a finitely generated unitary right $S$-module, there exists a non-zero $S$-module epimorphism $\gamma: \bigoplus_{i=1}^nf_iS \rightarrow Y$, where $n \in {\Bbb{N}}$ and each $f_i$ is an idempotent of $S$. Hence $R{\rm Hom}_S(\gamma,U): R{\rm Hom}_S(Y,U) \rightarrow R{\rm Hom}_S(\bigoplus_{i=1}^nf_iS,U)$ is a non-zero $R$-module monomorphism. It follows that there exists a non-zero $R$-module monomorphism $$R{\rm Hom}_S(Y,U) \rightarrow \bigoplus_{i=1}^nR{\rm Hom}_S(f_iS,U).$$ Since $R{\rm Hom}_S(f_iS,U) \cong Uf_i$ as $R$-modules, there exists a non-zero $R$-module homomorphism $h: Rx/K \rightarrow Uf_i$, where $f_i^2=f_i \in S$.  Since $Uf_i$ is injective in $R$Mod, there exists an $R$-module homomorphism $\eta: M \rightarrow Uf_i$ such that the following diagram is commutative
\begin{displaymath}
\xymatrix{
0 \ar[r] & Rx/{\rm Ker}\varepsilon g  \ar[d]_{\pi} \ar[r]^<<<<{\varepsilon g}&
M \ar[d]^{\eta}\\
& Rx/K \ar[r]_{h}& Uf_i,}
\end{displaymath}
 $\pi h \neq 0$ and so $\varepsilon g \eta \neq 0$. Then there exists $r \in R$ such that $(rx)g \eta \neq 0$. Therefore $\lbrace Uf~|~f^2=f \in S \rbrace$ is a cogenerating set for $R$Mod.\\
$(b)$. It is similar to the proof of the part $(a)$.
\end{proof}

\begin{Lem}\label{z1}
Let $R$ and $S$ be two rings and $U$ be a unitary $R$-$S$-bimodule. If ${\rm Hom}_R(-,U)S: R{\rm mod} \rightarrow {\rm mod}S$ is a duality with the inverse duality $R{\rm Hom}_S(-,U): {\rm mod}S \rightarrow R{\rm mod}$, then $R$ is a left locally noetherian ring and  $S$ is a right locally noetherian ring.
\end{Lem}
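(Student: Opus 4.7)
The plan is to show that every $R$-submodule of any $X \in R{\rm mod}$ is again finitely generated---equivalently, $X$ is noetherian---and, symmetrically, that every $S$-submodule of any $Y' \in {\rm mod}S$ is finitely generated; these are exactly the two conclusions required. The two arguments are formally identical, so I will only describe the $R$-side in detail.

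Write $D_1 := {\rm Hom}_R(-,U)S$ and $D_2 := R{\rm Hom}_S(-,U)$, so that by hypothesis $D_2 D_1 \simeq {\rm id}_{R{\rm mod}}$ and $D_1 D_2 \simeq {\rm id}_{{\rm mod}S}$ naturally. Fix $X \in R{\rm mod}$ and an arbitrary $R$-submodule $Y \subseteq X$, and let $\pi \colon X \twoheadrightarrow X/Y$ be the canonical epimorphism; since $X/Y$ is a quotient of a finitely generated unitary module, $X/Y \in R{\rm mod}$. Being one half of a contravariant equivalence, $D_1$ sends epimorphisms to monomorphisms, so I obtain a monomorphism $L := D_1(X/Y) \hookrightarrow M := D_1(X)$ in ${\rm mod}S$, and then $M/L$ is a quotient of the finitely generated $M$ and so lies in ${\rm mod}S$ as well. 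This yields a short exact sequence $0 \to L \to M \to M/L \to 0$ in ${\rm Mod}S$.

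The decisive step is to apply $D_2$ to this sequence. A direct check, which I would record as a small separate remark, shows that the unitarized contravariant Hom $R{\rm Hom}_S(-,U) \colon {\rm Mod}S \to R{\rm Mod}$ is left exact: left exactness of ${\rm Hom}_S(-,U)$ is classical, and both the injectivity on the left and the middle exactness pass to the unitary subfunctor, since unitariness is witnessed by an idempotent of $R$ which descends through any factorisation. Applied to our sequence this gives an exact sequence
\[
0 \longrightarrow D_2(M/L) \longrightarrow D_2(M) \longrightarrow D_2(L)
\]
in $R{\rm Mod}$, so $D_2(M/L) \cong \ker\bigl(D_2(M) \to D_2(L)\bigr)$. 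By the naturality of $D_2 D_1 \simeq {\rm id}_{R{\rm mod}}$ applied to the morphism $\pi$, the map $D_2(M) \to D_2(L)$ is identified (up to isomorphisms on source and target) with $\pi \colon X \to X/Y$, so its kernel is $Y$. Hence $Y \cong D_2(M/L)$, and since $M/L \in {\rm mod}S$ while $D_2$ sends ${\rm mod}S$ into $R{\rm mod}$, we conclude $Y \in R{\rm mod}$, i.e.\ $Y$ is finitely generated.

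It follows that every submodule of $X$ is finitely generated, so $X$ is noetherian, and therefore $R$ is left locally noetherian. The assertion for $S$ is proved by the mirror argument: start with $Y' \in {\rm mod}S$ and an $S$-submodule $K \subseteq Y'$, apply $D_2$ to $Y' \twoheadrightarrow Y'/K$, form the analogous short exact sequence in $R{\rm Mod}$, apply the similarly left-exact $D_1$, and identify the resulting kernel with $K$ using naturality of $D_1 D_2 \simeq {\rm id}_{{\rm mod}S}$. The only points requiring care are the left exactness of the unitarized Hom functors and the naturality identification of the connecting map, but both are short and routine once the definitions are unpacked.
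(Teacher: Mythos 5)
Your proof is correct and follows essentially the same route as the paper's: apply $D_1={\rm Hom}_R(-,U)S$ to the quotient map, take the cokernel of the resulting monomorphism (your $M/L$, the paper's $T$), and identify the submodule with $D_2$ of that cokernel using left exactness of the unitarized Hom functor together with the natural isomorphism $D_2D_1\simeq {\rm id}_{R{\rm mod}}$, so that finite generation is inherited from $D_2\colon {\rm mod}S\rightarrow R{\rm mod}$. The only (immaterial) difference is that the paper first reduces to submodules of the cyclic modules $Re$, whereas you argue directly for an arbitrary $X\in R{\rm mod}$.
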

\begin{proof}
Let $X$ be a finitely generated unitary left $R$-module. Then there exists an $R$-module epimorphism $\bigoplus_{i=1}^nRe_i \rightarrow X$, where $n \in {\Bbb{N}}$ and each $e_i$ is an idempotent of $R$. For each idempotent $e \in R$, we show that $Re$ is a noetherian left $R$-module. Let $e$ be an idempotent of $R$ and $L$ be an $R$-submodule of $Re$. Let $\pi: Re \rightarrow Re/L$ be the canonical projection.  Then ${\rm Hom}_R(\pi,U)S: {\rm Hom}_R(Re/L,U)S \rightarrow {\rm Hom}_R(Re,U)S$ is an $S$-module monomorphism. Set $T={\rm Coker}{\rm Hom}_R(\pi,U)S$. Consider the exact sequence $$0 \rightarrow {\rm Hom}_R(Re/L,U)S \overset{{\rm Hom}_R(\pi,U)S}{\rightarrow} {\rm Hom}_R(Re,U)S \overset{g}{\rightarrow} T \rightarrow 0.$$  Therefore there exists an $R$-module isomorphism $\gamma: L \rightarrow R{\rm Hom}_S(T,U)$ such that the following diagram is commutative
\begin{displaymath}
\xymatrixcolsep{5pc}\xymatrix{
0 \ar[r] & R{\rm Hom}_S(T,U) \ar[r]^{R{\rm Hom}_S(g,U)} & W \ar[r]^{\overline{\overline{\pi}}}  &
V \ar[r]& 0 \\
0 \ar[r] & L \ar[u]^{\gamma} \ar@{^{(}->}[r] &Re  \ar[u]^{\cong} \ar[r]_{\pi} & Re/L  \ar[u]^{\cong} \ar[r] & 0,}
\end{displaymath}
where the rows are exact, $W=R{\rm Hom}_S({\rm Hom}_R(Re,U)S,U)$, $V=R{\rm Hom}_S({\rm Hom}_R(Re/L,U)S,U)$ and $\overline{\overline{\pi}}=R{\rm Hom}_S({\rm Hom}_R(\pi,U)S,U)$. It follows that $L$ is a finitely generated left $R$-module. Consequently $Re$ is a noetherian left $R$-module. Then $X$ is a noetherian left $R$-module and hence $R$ is a left locally noetherian ring. By the similar argument we can see that $S$ is a right locally noetherian ring.
\end{proof}

\begin{Pro}\label{z4}
Let $R$ be a ring and $U$ be a unitary left $R$-module. Assume that $S$ is a subring of ${\rm End}_R(U)$ such that $S=S{\rm End}_R(U)$, $U$ is a unitary right $S$-module, $Uf$ is an injective module in $R$Mod for each idempotent $f \in S$. Suppose that ${\rm Hom}_R(-,U)S: R{\rm mod} \rightarrow {\rm mod}S$ is a duality with the inverse duality $R{\rm Hom}_S(-,U): {\rm mod}S \rightarrow R{\rm mod}$. Then $R$ is a left locally finite ring.
\end{Pro}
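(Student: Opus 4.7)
The plan is to show that every finitely generated unitary left $R$-module $X$ has finite length. By Lemma \ref{z1}, the existence of the duality already forces $R$ to be left locally noetherian, so $X$ is noetherian. It therefore suffices to prove that $X$ is also artinian, and the length conclusion then follows from the combination of the chain conditions.

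First I would assemble the ingredients supplied by the hypotheses. Since $R{\rm Hom}_S(-,U):{\rm mod}S\rightarrow R{\rm mod}$ is an equivalence it is in particular dense, and $Uf$ is injective in $R{\rm Mod}$ for every idempotent $f\in S$; hence Lemma \ref{4.3}(a) applies and gives that $\{Uf\mid f^2=f\in S\}$ is a cogenerating set for $R{\rm Mod}$. This is the precise hypothesis needed to invoke Lemma \ref{4.1}(a), which will yield the reconstruction formula $l_X(r_{{\rm Hom}_R(X,U)S}(K))=K$ for every $R$-submodule $K$ of $X$. Moreover, Lemma \ref{z1} tells us that $S$ is right locally noetherian, and by hypothesis $X\in R{\rm mod}$ so ${\rm Hom}_R(X,U)S\in{\rm mod}S$; therefore ${\rm Hom}_R(X,U)S$ is a noetherian right $S$-module.

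With these tools in place, the proof of artinianness is immediate. Let
\[
K_1\supseteq K_2\supseteq K_3\supseteq\cdots
\]
be a descending chain of $R$-submodules of $X$. Applying the annihilator operation $K\mapsto r_{{\rm Hom}_R(X,U)S}(K)$, which is order-reversing, produces an ascending chain
\[
r_{{\rm Hom}_R(X,U)S}(K_1)\subseteq r_{{\rm Hom}_R(X,U)S}(K_2)\subseteq\cdots
\]
of right $S$-submodules of the noetherian module ${\rm Hom}_R(X,U)S$. This chain must stabilize, say from some index $n$ onward. Applying $l_X(-)$ and using Lemma \ref{4.1}(a) we conclude $K_n=K_{n+1}=\cdots$, so $X$ is artinian. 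Combined with the noetherian property, $X$ has finite length, hence $R$ is left locally finite.

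There is no serious obstacle: once Lemma \ref{4.3}(a) is recognized as bridging the duality hypothesis to the cogenerating condition required by Lemma \ref{4.1}(a), the argument is the standard annihilator trick, and Lemma \ref{4.2} is not even needed for this direction. The only subtlety to confirm along the way is that the target ${\rm Hom}_R(X,U)S$ really does land in ${\rm mod}S$, which is guaranteed by the fact that ${\rm Hom}_R(-,U)S$ is a duality $R{\rm mod}\rightarrow{\rm mod}S$.
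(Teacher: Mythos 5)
Your proof is correct, but it follows a genuinely different (and leaner) route than the paper's. The paper reduces the claim to showing that every finitely generated unitary left $R$-module is \emph{finitely cogenerated}: it first observes that finite cogeneration plus the noetherian property from Lemma \ref{z1} forces finite length, and then, for an arbitrary family $\lbrace K_i\rbrace$ with $\bigcap_i K_i=0$, it uses \emph{both} halves of the annihilator correspondence --- Lemma \ref{4.1} together with Lemma \ref{4.2} --- to get $r_{{\rm Hom}_R(N,U)S}(\bigcap_i K_i)=\sum_i r_{{\rm Hom}_R(N,U)S}(K_i)$, extracts a finite subfamily from finite generation of ${\rm Hom}_R(N,U)S$, and then separately argues that $\bigcap_{\alpha\in{\rm Hom}_R(N,U)S}{\rm Ker}\alpha=0$ via the cogenerating set. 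You instead work only with descending \emph{chains}: the order-reversing map $K\mapsto r_{{\rm Hom}_R(X,U)S}(K)$ sends such a chain into an ascending chain of $S$-submodules of the noetherian module ${\rm Hom}_R(X,U)S$ (noetherian because the duality lands in ${\rm mod}S$ and $S$ is right locally noetherian by Lemma \ref{z1}), and the closure identity $l_X(r_{{\rm Hom}_R(X,U)S}(K))=K$ of Lemma \ref{4.1}(a) --- available via Lemma \ref{4.3}(a) exactly as you say --- pulls the stabilization back. This gives the descending chain condition directly, so Lemma \ref{4.2} and the finitely-cogenerated detour are indeed unnecessary for this implication. The paper's approach buys the slightly stronger intermediate fact that finitely generated modules are finitely cogenerated (for arbitrary families, not just chains), which is of independent interest but not needed for the stated conclusion; your approach is shorter and uses fewer of the supporting lemmas.
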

\begin{proof}
Let $M$ be a finitely generated unitary left $R$-module and $L_1 \supseteq L_2 \supseteq \ldots $ be a descending chain of submodules of $M$. Set $K=\bigcap_iL_i$. Then $M/K$ is a finitely generated unitary left $R$-module. If every finitely generated unitary left $R$-module is finitely cogenerated, then $M/K$ is finitely cogenerated. Since $\bigcap_iL_i/K=0$, there exists $n \in {\Bbb{N}}$ such that $\bigcap_{i=1}^nL_i/K=0$ and hence $\bigcap_{i=1}^nL_i=K$. It follows that $L_n=L_{n+i}$ for each $i> 0$ and hence $M$ is an artinian left $R$-module. Therefore by Lemma \ref{z1}, $M$ is an artinian and noetherian left $R$-module. It follows that $M$ has finite length. Thus it is enough to show that every finitely generated unitary left $R$-module is finitely cogenerated. Let $N$ be a finitely generated unitary left $R$-module and $\lbrace K_i ~|~ i\in I \rbrace$ be a family of submodules of $N$ such that $\bigcap_{i\in I}K_i=0$. Since by Lemma \ref{4.3}, $\lbrace Uf~|~f^2=f \in S \rbrace$ is a cogenerating set for $R{\rm Mod}$, by Lemmas \ref{4.1} and \ref{4.2}, it is easy to see that $r_{{\rm Hom}_R(N,U)S}(\bigcap_i K_i)=\sum_i r_{{\rm Hom}_R(N,U)S}(K_i)$ and hence ${\rm Hom}_R(N,U)S=\sum_i r_{{\rm Hom}_R(N,U)S}(K_i)$. Since ${\rm Hom}_R(N,U)S$ is a finitely generated unitary right $S$-module, there exists a finite subset $I'$ of $I$ such that ${\rm Hom}_R(N,U)S=\sum_{i \in I'}r_{{\rm Hom}_R(N,U)S}(K_i)$. Then $$\bigcap_{i \in I'}K_i \subseteq \bigcap_{\beta \in \sum_{i \in I'}r_{{\rm Hom}_R(N,U)S}(K_i)}{\rm Ker}\beta = \bigcap_{\alpha \in {\rm Hom}_R(N,U)S}{\rm Ker}\alpha.$$
Assume that there exits $0 \neq x \in \bigcap_{\alpha \in {\rm Hom}_R(N,U)S}{\rm Ker}\alpha$. There exists an idempotent $e \in R$ such that $ex=x$ and hence $Rx \neq 0$. Let $\ell: Rx \rightarrow N$ be the canonical injection.  By Lemma \ref{4.3}, $\lbrace Uf~|~f^2=f \in S \rbrace$ is a cogenerating set for $R{\rm Mod}$ and hence there exists an $R$-module homomorphism $h: N \rightarrow Uf$ such that $\ell h \neq 0$, where $f$ is an idempotent of $S$. It follows that there exists $r \in R$ such that $(rx)h \neq 0$. Then there exists an $R$-module homomorphism $h: N \rightarrow Uf$ with $(x)h \neq 0$. Set $\overline{h}=h\varepsilon$, where $\varepsilon: Uf \rightarrow U$ is the canonical injection. It is easy to see that $\overline{h} \in {\rm Hom}_R(N,U)S$ and $(x)\overline{h} \neq 0$ which is a contradiction. Then there exists a finite subset $I'$ of $I$ such that $\bigcap_{i \in I'}K_i=0$. Therefore every finitely generated unitary left $R$-module is finitely cogenerated and the result follows.
\end{proof}

\begin{Rem}
{\rm Let $S$ be a ring and $U$ be a unitary right $S$-module. Assume that $R$ is a subring of ${\rm End}_S(U)$ such that $R={\rm End}_S(U)R$, $U$ is a unitary left $R$-module and $eU$ is an injective module in Mod$S$ for each idempotent $e \in R$. Suppose that ${\rm Hom}_R(-,U)S: R{\rm mod} \rightarrow {\rm mod}S$ is a duality with the inverse duality $R{\rm Hom}_S(-,U): {\rm mod}S \rightarrow R{\rm mod}$. Then by the similar argument as in the proof of Proposition \ref{z4}, we can see that $S$ is a right locally finite ring.}
\end{Rem}

\begin{Lem}\label{z2}
Let $R$ and $S$ be two rings and $D_1: R{\rm mod} \rightarrow {\rm mod}S$ be a duality with the inverse duality $D_2: {\rm mod}S \rightarrow R{\rm mod}$.  Assume that $X$ and $Y$ are modules in $R{\rm mod}$ and ${\rm mod}S$, respectively.
\begin{itemize}
\item[$(a)$] If $R$ is a left locally noetherian ring and $D_1(X)$ is a projective right $S$-module, then $X$ is injective in $R{\rm Mod}$.
\item[$(b)$] If $S$ is a right locally noetherian ring and $D_2(Y)$ is a projective left $R$-module, then $Y$ is injective in ${\rm Mod}S$.
\end{itemize}
\end{Lem}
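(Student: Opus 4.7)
The plan is to use the duality to convert the projectivity hypothesis on $D_1(X)$ into exactness of ${\rm Hom}_R(-, X)$ on $R{\rm mod}$, and then to deduce injectivity of $X$ in $R{\rm Mod}$ via a Baer-type criterion adapted to rings with local units. I describe the argument for (a); part (b) follows by the symmetric argument, swapping $R$ and $S$, $D_1$ and $D_2$, and left for right.

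First I record the hom-transfer coming from the duality. Since $D_1$ is a contravariant equivalence with quasi-inverse $D_2$, the rule $f \mapsto D_1(f)$ gives a natural isomorphism
\[
{\rm Hom}_R(M,N) \;\cong\; {\rm Hom}_S(D_1(N),D_1(M))
\]
for all $M,N \in R{\rm mod}$, and $D_1$ carries short exact sequences in $R{\rm mod}$ to short exact sequences in ${\rm mod}S$ with arrows reversed. Given $0 \to A \to B \to C \to 0$ in $R{\rm mod}$, applying the exact functor ${\rm Hom}_S(D_1(X),-)$ (exact by projectivity of $D_1(X)$) to $0 \to D_1(C) \to D_1(B) \to D_1(A) \to 0$ and transporting back via the natural isomorphism shows that ${\rm Hom}_R(-,X)$ is exact on $R{\rm mod}$.

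Next I invoke the following Baer-type criterion for rings with local units: a unitary left $R$-module $X$ is injective in $R{\rm Mod}$ if and only if for every idempotent $e \in R$ and every $R$-submodule $I$ of $Re$, every $R$-homomorphism $I \to X$ extends to $Re \to X$. The non-trivial direction is the standard Zorn's lemma argument: given a monomorphism $A \hookrightarrow B$ in $R{\rm Mod}$ and a morphism $A \to X$, a maximal partial extension $f^{*}\colon A^{*} \to X$ must satisfy $A^{*} = B$, for otherwise one picks $b \in B \setminus A^{*}$ and an idempotent $e \in R$ with $eb = b$ (available since $B$ is unitary), forms $I = \{\, re \in Re \mid reb \in A^{*}\,\}$, extends $re \mapsto f^{*}(reb)$ to $g\colon Re \to X$ using the hypothesis, and pastes $f^{*}$ and $g$ into a well-defined extension of $f^{*}$ to $A^{*} + Rb$, contradicting maximality. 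The generating set $\{\, Re \mid e^{2}=e \in R\,\}$ (Remark \ref{re77}) enters implicitly in ensuring that testing on the $Re$'s suffices.

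Finally I verify the criterion. Let $e^{2}=e \in R$ and let $I$ be an $R$-submodule of $Re$. Because $R$ is left locally noetherian and $Re$ is finitely generated unitary, $Re$ is noetherian, so $I$ is finitely generated and $0 \to I \to Re \to Re/I \to 0$ lies in $R{\rm mod}$. By the exactness of ${\rm Hom}_R(-,X)$ on $R{\rm mod}$ obtained above, the restriction map ${\rm Hom}_R(Re,X) \to {\rm Hom}_R(I,X)$ is surjective, which is precisely the required extension property, so $X$ is injective. The main obstacle is the careful formulation and proof of the generalized Baer criterion for $R{\rm Mod}$; once that is granted, the transfer through the duality is routine.
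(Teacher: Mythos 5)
Your argument is correct and follows essentially the same route as the paper: reduce injectivity to the extension property for submodules $I\subseteq Re$, use left local noetherianness to place $0\to I\to Re\to Re/I\to 0$ in $R{\rm mod}$, and transport the lifting problem through the duality so that projectivity of $D_1(X)$ supplies the extension. The only difference is that the paper cites a Baer-type criterion from the literature (Harada's Lemma 1) where you prove the local-units version by the Zorn's lemma argument; your verification of that criterion is sound.
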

\begin{proof}
$(a)$. Assume that $R$ is a left locally noetherian ring and $D_1(X)$ is a projective right $S$-module.  We show that $X$ is injective in $R{\rm Mod}$. Consider the following diagram
\begin{displaymath}
\xymatrix{
0 \ar[r] & L  \ar[d]_{g} \ar[r]^<<<<{\ell}&
Re \\
& X, & }
\end{displaymath}
where $e$ is an idempotent of $R$, $\ell: L \rightarrow Re$ is an $R$-module monomorphism and $g: L \rightarrow X$ is an $R$-module homomorphism. Since $R$ is a left locally noetherian, $Re$ is a noetherian left $R$-module and hence $L$ is a finitely generated left $R$-module. Applying $D_1$ we have the following diagram
\begin{displaymath}
\xymatrix{
 & D_1(X)\ar[d]^{D_1(g)} \\
D_1(Re) \ar[r]_{D_1(\ell)} & D_1(L) \ar[r] & 0,}
\end{displaymath}
where its row is exact. There exists an $S$-module homomorphism $h: D_1(X) \rightarrow D_1(Re)$ such that the following diagram is commutative
\begin{displaymath}
\xymatrix{
 & D_1(X) \ar[dl]_{h} \ar[d]^{D_1(g)} \\
D_1(Re) \ar[r]_{D_1(\ell)} & D_1(L) \ar[r] & 0.}
\end{displaymath}
Then we have the following commutative diagrams
\begin{displaymath}
\xymatrix{
& L \ar@/_/[ddl]_<<<<{g} \ar[r]^{\ell} \ar[d]_{\cong}& Re \ar[d]^{\cong}\\
 & D_2D_1(L)  \ar[d]_{D_2D_1(g)} \ar[r]^<<<<{D_2D_1(\ell)}&
D_2D_1(Re) \ar[dl]^{D_2(h)} \\
X \ar[r]^<<<{\cong} & D_2D_1(X). & }
\end{displaymath}
Therefore by \cite[Lemma 1]{H}, $X$ is injective in $R$Mod.\\
$(b)$. It is similar to the proof of the part $(a)$.
\end{proof}

Now we ready to prove Theorem \ref{rdu3}:

\begin{proof}
$(b) \Rightarrow (a)$. It follows from Proposition \ref{ref3}.\\
$(a) \Rightarrow (b)$. Let $D_1: R{\rm mod} \rightarrow {\rm mod}S$ be an additive contravariant equivalence with the inverse equivalence $D_2: {\rm mod}S \rightarrow R{\rm mod}$.   Consider the split direct systems $\lbrace Re, \alpha_{ee'}, \rho_{e'e} ~|~e,e' \in E\rbrace$ and  $\lbrace fS, \alpha_{ff'}, \rho_{f'f} ~|~f,f' \in E'\rbrace$ in $R$Mod and Mod$S$, respectively. Then
\begin{center}
$\lbrace D_1(Re), D_1(\rho_{e'e}), D_1(\alpha_{ee'})~|~e,e' \in E\rbrace$ ~~and ~~$\lbrace D_2(fS), D_2(\rho_{f'f}), D_2(\alpha_{ff'})~|~f,f' \in E'\rbrace$
\end{center}  are split direct systems of finitely generated modules in Mod$S$ and $R$Mod, respectively. Set $U_S={\underrightarrow{\lim}}_ED_1(Re)$ and ${_RV}={\underrightarrow{\lim}}_{E'}D_2(fS)$. By the similar argument in the proof \cite[Theorem 1]{ame}, $U$ is a balanced unitary $R$-$S$-bimodule and $U \cong V$ as $R$-modules. It is not difficult to show that ${\rm Hom}_S(S, D_1(-))S \simeq {\rm Hom}_R(-,U)S$. Also by \cite[Proposition 1.1]{am}, $D_1(-) \simeq {\rm Hom}_S(S,D_1(-))S$. Hence $D_1(-) \simeq {\rm Hom}_R(-,U)S$. By the similar argument we can see that $D_2(-) \simeq R{\rm Hom}_S(-,U)$.  This implies that ${\rm Hom}_R(-,U)S: R$mod $\rightarrow $mod$S$ is a duality with the inverse duality $R{\rm Hom}_S(-,U):$ mod$S \rightarrow R$mod. By Lemma \ref{z1}, $R$ is a left locally noetherian ring and also $S$ is a right locally noetherian ring. Hence by Lemma \ref{z2}, for each idempotent $e \in R$ and $f\in S$, $D_1(Re)$ and $D_2(fS)$ are injective modules in Mod$S$ and $R$Mod, respectively. Since $D_1(Re) \cong {\rm Hom}_R(Re,U)S \cong eU$ as $S$-modules and $D_2(fS) \cong S{\rm Hom}_S(fS,U) \cong Uf$ as $R$-modules, by Proposition \ref{z4}, $R$ is a left locally finite ring.  Also by Lemma \ref{4.3}, $\lbrace Uf~|~f^2=f \in S \rbrace$ and $\lbrace eU~|~e^2=e \in R \rbrace$ are cogenerating sets for $R$Mod and Mod$S$, respectively. Therefore the result follows. \\
$(a) \Leftrightarrow (c)$. By symmetry.
\end{proof}

\section*{acknowledgements}
The research of the first author was in part supported by a grant from IPM. Also, the research of the second author was in part supported by a grant from IPM (No. 1400170417).

\end{document}